\DeclareSymbolFont{cyrletters}{OT2}{wncyr}{m}{n}
\DeclareMathSymbol{\Sha}{\mathalpha}{cyrletters}{"58}
\theoremstyle{plain}
\newtheorem{theorem}{Theorem}[section]
\newtheorem*{theorem*}{Theorem}
\newtheorem{corollary}[theorem]{Corollary}
\newtheorem{lemma}[theorem]{Lemma}
\newtheorem{proposition}[theorem]{Proposition}
\newtheorem*{conjecture*}{Conjecture}
\theoremstyle{definition}
\newtheorem{definition}[theorem]{Definition}
\theoremstyle{remark}
\newtheorem*{remark*}{Remark}
\newtheorem*{remarks*}{Remarks}
\numberwithin{equation}{section}
\newcommand{\R}{\mathbb R}
\newcommand{\N}{\mathbb N}
\newcommand{\Z}{\mathbb Z}
\newcommand{\C}{\mathbb C}
\def\({\left(}
\def\){\right)}
\newcommand{\rank}{\text{rank}}
\newcommand{\crank}{\text{crank}}
\newcommand{\spt}{\mathrm{spt}}
\newcommand{\ospt}{\mathrm{ospt}}
\newcommand{\ST}{\mathrm{ST}}
\begin{document}

\title
{Asymptotic inequalities for positive crank and rank moments}
\author{Kathrin Bringmann}
\address{Mathematical Institute\\University of
Cologne\\ Weyertal 86-90 \\ 50931 Cologne \\Germany}
\email{kbringma@math.uni-koeln.de}
\author{Karl Mahlburg}
\address{Department of Mathematics \\
Louisiana State University \\
Baton Rouge, LA \\ U.S.A.}
\email{mahlburg@math.lsu.edu}

\thanks {The research of the first author was supported by the Alfried Krupp Prize for
Young University Teachers of the Krupp Foundation.}
\subjclass[2000] {11P55, 05A17}

\date{\today}

\keywords{integer partitions; rank; crank; Tauberian theorem}

\begin{abstract}

Andrews, Chan, and Kim recently introduced a modified definition of crank and rank moments for integer partitions that allows the study of both even and odd moments.  In this paper, we prove the asymptotic behavior of these moments in all cases, and our main result states that while the two families of moment functions are asymptotically equal, the crank moments are always asymptotically larger than the rank moments.

Andrews, Chan, and Kim primarily focused on one case, and proved the stronger result that the first crank moment is strictly larger than the first rank moment for all partitions by showing that the difference is equal to a combinatorial statistic on partitions that they named the ospt-function.  Our main results therefore also give the asymptotic behavior of the ospt-function, and we further determine its behavior modulo $2$ by relating its parity to Andrews spt-function.

\end{abstract}

\maketitle

\section{Introduction and Statement of Results}
\label{S:intro}

Ramanujan's famous congruences for the integer partition function $p(N)$ require little introduction, as they continue to inspire active research even nearly a century after his original discoveries.  He proved in \cite{Ram21} that for $N\geq 0$,
\begin{align}
\label{E:Ram}
p(5N+4)&\equiv 0\pmod 5,\\
p(7N+5)&\equiv 0\pmod 7,\notag \\
p(11N+6)&\equiv 0\pmod{11}. \notag
\end{align}
His approach relied on the modularity properties of the partition generating function
$$
\sum_{N \geq 0} p(N) q^N = \frac1{(q;q)_\infty},
$$
where for $n\in \N_0 \cup\{\infty\}$ we adopt the standard $q$-factorial notation $(a)_n=(a; q)_n := \prod_{j = 0}^{n-1} (1 - aq^j)$.  However, his use of Hecke operators on $\ell$-adic modular forms gave little combinatorial insight as to why the partition function satisfies \eqref{E:Ram}.

Combinatorial partition statistics have subsequently been used to better understand the Ramanujan congruences, beginning with Dyson's definition of \cite{Dy44} the {\it rank} of a partition $\lambda$.  This is given by
\begin{equation}\label{definerank}
\rank(\lambda) := \text{largest part of} \; \lambda - \text{number of parts
of} \; \lambda.
\end{equation}
The rank was later followed by Garvan's discovery of the {\it crank} in \cite{Gar88}.  Andrews and Garvan's subsequent reformulation for the definition of the crank in \cite{AndG88} then successfully completed the search for combinatorial decompositions of the three congruences in \eqref{E:Ram}.  If $o(\lambda)$ denotes the number of ones in
$\lambda$,
and $\mu(\lambda)$ is the number of parts strictly larger than $o(\lambda),$ then
\begin{equation}\label{definecrank}
\crank(\lambda) := \begin{cases} \text{largest part of} \; \lambda \qquad &
\text{if} \; o(\lambda) = 0, \\
\mu(\lambda) - o(\lambda) & \text{if} \; o(\lambda) > 0.
\end{cases}
\end{equation}

However, the study of these statistics does not end with the Ramanujan congruences, as there have been a great wealth of further results regarding the arithmetic properties of the crank and rank \cite{B09, Gar10, Mah05}, connections to modular and automorphic forms, including mock theta functions \cite{BGM09, BO10}, asymptotic behavior and inequalities \cite{BMR10, BMR12}, and related combinatorial objects \cite{And07, ACK12, Gar11}.  In this paper we focus particularly on the latter two topics, and use analytic and number theoretic techniques to build on ideas introduced in the recent work of Andrews, Chan, and Kim \cite{ACK12}.

Let $M(m,N)$ (resp. $N(m,N)$) be the number of partitions of $N$
with
crank (resp. rank) $m$. Then aside from the anomalous case of $M(m,N)$
when $N=1$ (where the correct combinatorial values are $M(0,1) = 1$ and $M(m,1) = 0$ for all $m \neq 0$), the
two-parameter
generating functions may be written as  \cite{AndG88, AS54}
\begin{align}
\label{E:Cgen}
C(w;q) & := \sum_{\substack{m \in \Z \\ N \geq 0}} M(m,N) w^m q^N =
\frac{1-w}{(q)_\infty} \sum_{n \in \Z} \frac{(-1)^{n} q^{n(n+1)/2}}{1-wq^n},
\\
\label{E:Rgen}
R(w;q) & := \sum_{\substack{m \in \Z \\ N \geq 0}} N(m,N) w^m q^N =
\frac{1-w}{(q)_\infty} \sum_{n \in \Z} \frac{(-1)^{n}
q^{n(3n+1)/2}}{1-wq^n}.
\end{align}
Atkin and Garvan introduced the moments of these statistics in \cite{AG03} as follows.  For $r \in \N$, the $r$-th  \textit{crank moment} is defined by
\begin{align}
\label{E:defMr}
M_r(N):=\sum_{m \in \Z} m^r\, M(m,N),
\end{align}
and the $r$-th \textit{rank moment} by
\begin{align}
\label{E:defNr}
N_r(N):=\sum_{m \in \Z} m^r\, N(m,N).
\end{align}

Atkin and Garvan described algebraic and arithmetic relationships between crank and rank moments, but the nature of their exact numerical values was first explored by Garvan, who conjectured a strong inequality between the two families of moments in \cite{Gar10}; a related set of conjectures were later given by the present authors in \cite{BM09}.  All of these conjectures were subsequently proven, and the most definitive known results can be succinctly stated as below.
\begin{theorem*}
Suppose that $r = 2k \in \N$ is even.
\begin{enumerate}
\item
As $N \rightarrow \infty$,
\begin{equation}
\label{E:MsimN}
M_{2k}(N) \sim N_{2k}(N).
\end{equation}
\item
For all $N > 0$,
\begin{equation}
\label{E:M>N}
M_{2k}(N) > N_{2k}(N).
\end{equation}
\end{enumerate}
\end{theorem*}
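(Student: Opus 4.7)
The two statements call for somewhat different techniques, so I would attack them separately after a common setup. In both cases the starting point is to differentiate $C(w;q)$ and $R(w;q)$ in $w$ and evaluate at $w = 1$. A direct computation from \eqref{E:Cgen} and \eqref{E:Rgen} yields, for every $r \in \N$, $q$-series expressions
$$\sum_{N \ge 0} M_r(N) q^N = \frac{\mathcal{C}_r(q)}{(q)_\infty}, \qquad \sum_{N \ge 0} N_r(N) q^N = \frac{\mathcal{R}_r(q)}{(q)_\infty},$$
where $\mathcal{C}_r$ is a quasi-modular combination of Lambert and Eisenstein-type series and $\mathcal{R}_r$ arises from $w$-derivatives of an Appell--Lerch sum and is therefore a mock-modular object.

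For part (i), I would apply Wright's circle method near the dominant cusp $q = 1$. Using the modular transformation of $1/(q)_\infty$ together with the known harmonic Maass form completion of the rank generating function, one extracts asymptotic expansions of both sides. Both moments have a main term of the form $c_{k}\, N^{\alpha(k)} \exp(\pi \sqrt{2N/3})$ with the same exponent and the same positive constant $c_{k}$, since the non-holomorphic completion of the rank piece and all contributions away from $q = 1$ are of strictly lower exponential order. Matching leading constants then gives \eqref{E:MsimN}.

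For part (ii), my plan is to pass to Garvan's symmetrized moments
$$\mu^{M}_{2k}(N) := \sum_{m \in \Z} \binom{m + k}{2k} M(m, N), \qquad \mu^{N}_{2k}(N) := \sum_{m \in \Z} \binom{m + k}{2k} N(m, N),$$
which are non-negative integers with natural combinatorial interpretations. The ordinary moments $M_{2k}(N)$ and $N_{2k}(N)$ admit a common positive-coefficient triangular expansion in terms of the $\mu^{M}_{2j}$ and $\mu^{N}_{2j}$ for $j \le k$, so strict positivity of $M_{2k}(N) - N_{2k}(N)$ reduces to strict positivity of $\mu^{M}_{2j}(N) - \mu^{N}_{2j}(N)$ for each $1 \le j \le k$ and every $N > 0$. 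I would establish the latter by producing an explicit $q$-series identity in which the difference is written as a manifestly non-negative (and nontrivial for $N > 0$) sum, for instance via Garvan's combinatorial rewriting of such moment differences.

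The hard part will be controlling the rank side in both halves. In part (i), because $R(w;q)$ is only mock modular its $w$-derivatives do not inherit a clean modular transformation law, and one must carefully isolate the principal holomorphic contribution at $q = 1$ and bound the harmonic Maass form error term to preserve the claimed leading asymptotic. In part (ii), the challenge is organizing the $q$-series identities so that the difference of symmetrized moments is manifestly positive term by term rather than merely on average; the strict inequality is sharper than what the Tauberian analysis of part (i) can detect, and so demands a genuinely combinatorial or algebraic input beyond the circle-method estimates.
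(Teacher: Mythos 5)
This theorem is not proved in the paper at all: it is quoted as known, with part (i) coming from \cite{BMR10} (see also \cite{BMR12}) and part (ii) from Garvan \cite{Gar11}, who writes each difference $M_{2k}(N)-N_{2k}(N)$ as a positive linear combination of higher spt-functions. Your outline follows exactly those two routes: a Wright-type circle-method/Tauberian analysis at $q=1$ using the modular and mock-modular generating functions for \eqref{E:MsimN} (which is also recovered by the present paper's analysis of the positive moments together with \eqref{E:MN2k}), and Garvan's symmetrized moments plus a manifestly nonnegative combinatorial expansion for \eqref{E:M>N}. So the strategy is the right one, and correct at the level of a plan.

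That said, be clear about where the actual content lies, since your write-up defers it. For part (ii), your reduction assumes precisely the two nontrivial inputs: (a) that the triangular change of basis between $m^{2k}$ and the symmetrized binomial weights has nonnegative coefficients (this is a lemma Garvan proves, not a formality, and it is established for the weight $\binom{m+\lfloor (r-1)/2\rfloor}{r}$ rather than your $\binom{m+k}{2k}$, so watch the normalization), and (b) the identity expressing each symmetrized difference as a higher spt-function, which is the heart of \cite{Gar11} and rests on Bailey-pair/generating-function identities you do not supply. As written, part (ii) therefore restates the goal rather than proving it. For part (i), the control of the nonholomorphic completion of the rank-moment generating function is the technical core of \cite{BMR10}; note also that any circle-method or Tauberian argument can only ever give $M_{2k}(N)>N_{2k}(N)$ for $N\geq n_r$ (exactly the limitation recorded in Corollary \ref{C:main} of this paper), so the strict inequality for all $N>0$ genuinely requires the combinatorial input — a point your closing remark correctly anticipates.
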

\begin{remark*}
In both cases, much more is known than is stated above.  The authors and Rhoades \cite{BMR10} proved that there are certain explicit constants $A_{2k}$ such that
\begin{equation}
\label{E:MsimNA}
M_{2k}(N) \sim N_{2k}(N) \sim A_{2k} N^{k} p(N).
\end{equation}
Furthermore, Garvan \cite{Gar11} showed that each of the differences $M_{2k}(N) - N_{2k}(N)$ can be written as a linear combination (with positive coefficients) of ``higher spt-functions'', which are manifestly positive counts of certain combinatorial objects.
\end{remark*}

However, the simple fact that $M(m,N)$ and $N(m,N)$ are both invariant under $m \mapsto -m$ clearly implies that $\eqref{E:defMr}$ and \eqref{E:defNr} are both identically zero when $r$ is odd.  We therefore adopt the definition proposed by Andrews, Chan, and Kim \cite{ACK12} in order to study nontrivial odd crank and rank moments.
\begin{definition}
\label{D:MN+}
Suppose that $r \in \N$.  The {\it $r$-th positive crank (resp. rank) moment} is defined by
\begin{align*}
M_r^+ (N) & := \sum_{m\in\N}m^r M(m,N) , \\
N_r^+ (N) & := \sum_{m\in\N}m^r N(m,N).
\end{align*}
\end{definition}
\begin{remark*}
Note that if $r$ is even, there is no new information gained in the study of the positive crank and rank moments, as for $k\geq 1$
\begin{align}
\label{E:MN2k}
M_{2k}^+ (N) & = \frac12 M_{2k} (N), \qquad \text{and } \quad
N_{2k}^+ (N) =  \frac12 N_{2k} (N) .
\end{align}
\end{remark*}

After introducing the definition of general positive moments, Andrews, Chan, and Kim focused on the first new case, namely, the odd moments $M_1^+(N)$ and $N_1^+(N)$.  One of their main results states that the first crank moment is always larger than the first rank moment.
\begin{theorem*}[Theorem 3 in \cite{ACK12}]
For $N > 0$,
\begin{equation}
\label{E:M1>N1}
M_1^+(N) > N_1^+(N).
\end{equation}
\end{theorem*}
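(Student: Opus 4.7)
The plan is to study the difference $\ospt(N) := M_1^+(N) - N_1^+(N)$ via its generating function and then to establish its strict positivity, preferably by exhibiting a combinatorial interpretation.

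First, I would extract single-variable generating functions for the positive moments directly from the two-parameter series in \eqref{E:Cgen} and \eqref{E:Rgen}. Because $M(m,N) = M(-m,N)$ and $N(m,N) = N(-m,N)$, only the $n \geq 1$ terms of the respective Hecke-type sums contribute to positive powers of $w$. Starting from the identity
$$\frac{1-w}{1-wq^n} = 1 - (1-q^n)\sum_{k \geq 1} q^{n(k-1)} w^k \qquad (n \geq 1),$$
I would read off the coefficient of $w^k$ in $C(w;q)$ and $R(w;q)$, then weight by $k$ and sum using $\sum_{k \geq 1} k\, q^{n(k-1)} = 1/(1-q^n)^2$. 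After simplification this yields
$$\sum_{N \geq 0} M_1^+(N)\, q^N = -\frac{1}{(q)_\infty}\sum_{n \geq 1}(-1)^n \frac{q^{n(n+1)/2}}{1-q^n},$$
and the analogous formula for $N_1^+$ with $n(n+1)/2$ replaced by $n(3n+1)/2$. Subtracting and using the algebraic identity $\tfrac{n(3n+1)}{2} - \tfrac{n(n+1)}{2} = n^2$ gives the compact form
$$\sum_{N \geq 0} \ospt(N)\, q^N = \frac{1}{(q)_\infty}\sum_{n \geq 1}(-1)^{n+1} q^{n(n+1)/2}\, \frac{1 - q^{n^2}}{1 - q^n}.$$

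The main obstacle is then to show that every coefficient of this series is strictly positive for $N \geq 1$; because of the alternating signs this is not visible term by term. My strategy would be combinatorial, in the spirit of Andrews' $\spt$-function: I would seek an interpretation of $\ospt(N)$ as a count of an explicit class of decorated partitions of $N$ (for instance, smallest parts in partitions of a specified type, or overpartitions satisfying an explicit inequality), and match this count against the alternating Lambert-type sum above via a sign-reversing involution that cancels the negative contributions. A complementary analytic route would be to apply $q$-series transformations such as Jacobi's triple product or a Bailey-pair identity to rewrite the inner sum as a sum or product with manifestly non-negative coefficients. Exposing this hidden positivity is the heart of the matter and the step I expect to require the most genuine insight.
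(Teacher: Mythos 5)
Your generating-function reduction is correct: expanding $\frac{1-w}{1-wq^n}$ for $n\geq 1$, weighting by $m$, and summing does give
$\sum_{N\geq 0}\ospt(N)q^N=\frac{1}{(q)_\infty}\sum_{n\geq 1}(-1)^{n+1}q^{\frac{n(n+1)}{2}}\frac{1-q^{n^2}}{1-q^n}$,
which is exactly the series \eqref{E:osptGen} recorded in Section \ref{S:ospt}. But this is only the setup. The statement to be proved is the strict inequality $M_1^+(N)>N_1^+(N)$, i.e.\ strict positivity of every coefficient of this alternating series, and at precisely this point your proposal stops: you say you would ``seek'' a combinatorial interpretation or a sign-reversing involution, or ``apply'' a triple-product or Bailey-pair transformation, but you do not produce any of these. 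That missing step is the entire content of the theorem. The known argument (Theorem 3 and the discussion around Corollary 8 in \cite{ACK12}, which is what this paper cites rather than reproves, and which is recalled here as \eqref{E:osptST}) constructs the string statistic $\ST(\lambda)$ counting even and odd strings and proves the nontrivial identity $\ospt(N)=\sum_{\lambda\vdash N}\ST(\lambda)$; positivity then still requires the (easy but necessary) observation that every $N\geq 1$ has at least one partition containing an even or odd string, e.g.\ the partition with exactly one part equal to $1$ when $N$ is not forced otherwise, or $(2)$ for $N=2$. None of the candidate routes you list is carried far enough to show that the cancellations in the alternating Lambert-type sum leave a strictly positive coefficient for every $N$, so as it stands the proposal establishes the generating function but not the inequality.

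A secondary caution: your justification that ``only the $n\geq 1$ terms contribute to positive powers of $w$'' needs the correct expansion region (the $n\leq 0$ terms of \eqref{E:Cgen} and \eqref{E:Rgen} must be expanded so that they produce only nonpositive powers of $w$, or one must invoke the $m\mapsto -m$ symmetry more carefully); this is routine and fixable, but as written it is asserted rather than argued. The essential gap remains the unproved positivity step.
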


In this paper, we combine the above ideas and study the relationship between all positive crank and rank moments.  Our main result precisely identifies the first two terms in the asymptotic expansion of the positive crank and rank moments.
\begin{theorem}
\label{T:maintheorem2}
Suppose that $r \in \N$.
\begin{enumerate}
\item
\label{T:maintheorem2:MsimN}
As $N \to \infty$,
\begin{align*}
M_r^+ (N) \sim N_r^+(N) & \sim  \gamma_r \: N^{\frac{r}2 -1} e^{\pi\sqrt{\frac{2N}3}}, \\
\end{align*}
where
\begin{align*}
\gamma_r &:= r! \: \zeta (r) \left( 1- 2^{1-r}\right) \frac{6^{\frac{r}2}}{4\sqrt{3} \pi^r}.
\end{align*}
Here $\zeta (s)$ denotes the Riemann $\zeta$-function.

\item
\label{T:maintheorem2:M>N}
As $N \to \infty$,
\begin{align*}
M_r^+ (N) - N_r^+ (N) &\sim  \delta_r \: N^{\frac{r}2 - \frac32} e^{\pi\sqrt{\frac{2N}3}},
\end{align*}
where
\begin{align*}
\delta_r &:= r! \: \zeta (r-2) \left( 1- 2^{3-r}\right) \frac{6^{\frac{r-1}2}}{4\sqrt{3} \pi^{r-1}}.
\end{align*}
\end{enumerate}
\end{theorem}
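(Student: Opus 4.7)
The plan is to derive two-term asymptotic expansions of the generating functions
\begin{equation*}
\mathcal{C}_r^+(q) := \sum_{N \geq 0} M_r^+(N) q^N, \qquad \mathcal{R}_r^+(q) := \sum_{N \geq 0} N_r^+(N) q^N
\end{equation*}
as $q = e^{-t} \to 1^-$, and then apply Ingham's Tauberian theorem to pass from these expansions to the coefficient asymptotics. Part (i) for even $r$ is immediate from \eqref{E:MN2k} combined with the known asymptotic \eqref{E:MsimNA}, so I focus throughout on odd $r$.

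First I would derive explicit Lambert-series-like expressions for $\mathcal{C}_r^+$ and $\mathcal{R}_r^+$. Starting from the Andrews--Garvan and Atkin--Swinnerton-Dyer identities \eqref{E:Cgen} and \eqref{E:Rgen}, I expand $(1-w)/(1-wq^n)$ as a geometric series in $w$ (treating $n \geq 1$ and $n \leq 0$ separately, and using the symmetries $M(\pm m,N) = M(\mp m,N)$ and $N(\pm m,N) = N(\mp m,N)$) to extract the coefficient of $w^m$ for $m \geq 1$. Summing against $m^r$ via the polylogarithmic identity $P_r(x) := \sum_{k \geq 1} k^r x^k = (x\,d/dx)^r \frac{x}{1-x}$, which has leading singularity $P_r(x) \sim r!/(1-x)^{r+1}$ as $x \to 1^-$, yields expressions
\begin{equation*}
\mathcal{C}_r^+(q) = \frac{1}{(q)_\infty}\sum_{n \geq 1}(-1)^{n-1} q^{n(n-1)/2}(1-q^n)\,P_r(q^n), \quad \mathcal{R}_r^+(q) = \frac{1}{(q)_\infty}\sum_{n \geq 1}(-1)^{n-1} q^{n(3n-1)/2}(1-q^n)\,P_r(q^n),
\end{equation*}
so both generating functions share the same ``Lambert-series shape'' and differ only in the theta exponents $n(n-1)/2$ vs.\ $n(3n-1)/2$.

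To analyze $q \to 1^-$ I set $q = e^{-t}$ and use a Mellin transform argument. After Taylor-expanding the theta factor $e^{-n(n-1)t/2}$ (resp.\ $e^{-n(3n-1)t/2}$) in powers of $t$, each sum reduces to a linear combination of expressions $\sum_{n \geq 1}(-1)^{n-1} n^{j} F_0(nt)$, where $F_0(u) = (1 - e^{-u})P_r(e^{-u})$ has singular behaviour $r!/u^r + O(1/u^{r-1})$ near $u = 0$. Pairing the Mellin transform of $F_0$ against the alternating Dirichlet series $\sum_{n \geq 1}(-1)^{n-1}n^{-s} = (1-2^{1-s})\zeta(s)$ and shifting the contour past the rightmost two poles (at $s = r$ and, after cancellation at $s = r-1$, at $s = r - 2$), then multiplying by the modular asymptotic $(q)_\infty^{-1} \sim \sqrt{t/(2\pi)}\,e^{\pi^2/(6t)}$, produces two-term expansions
\begin{equation*}
\mathcal{C}_r^+(e^{-t}) = e^{\pi^2/(6t)}\bigl(\lambda_r\,t^{1/2-r} + \mu_r^C\,t^{3/2-r} + O(t^{5/2-r})\bigr),\  \mathcal{R}_r^+(e^{-t}) = e^{\pi^2/(6t)}\bigl(\lambda_r\,t^{1/2-r} + \mu_r^R\,t^{3/2-r} + O(t^{5/2-r})\bigr).
\end{equation*}
The key observation is that the leading pole at $s = r$ only sees the singular factor $(1-q^n)P_r(q^n) \sim r!/(nt)^r$; the theta-exponent factors contribute only $1 + O(n^2 t)$ and are invisible here, yielding a common constant $\lambda_r = r!(1-2^{1-r})\zeta(r)/\sqrt{2\pi}$, which after Ingham inversion gives precisely $\gamma_r$. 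At the subleading order the $\eta(r-1)$-type contributions from the first correction of the theta factor cancel against the subleading term of $F_0$, leaving only a multiple of $(1 - 2^{3-r})\zeta(r-2)$; the coefficients $3/2$ vs.\ $1/2$ arising from $n(3n-1)/2 - n(n-1)/2 = n^2$ yield $\mu_r^C - \mu_r^R = r!(1-2^{3-r})\zeta(r-2)/\sqrt{2\pi}$, exactly the combination appearing in $\delta_r$. Applying Ingham's theorem to the difference $\mathcal{C}_r^+ - \mathcal{R}_r^+$, whose leading singularity is now $t^{3/2-r}$ rather than $t^{1/2-r}$, gives part (ii).

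The principal obstacle is the subleading Mellin step: rather than just identifying orders of magnitude, one must expand $(1-q^n)q^{n(n-1)/2}P_r(q^n)$ and its rank analogue one full order beyond the leading term in $t$, observe the cancellation of the $\zeta(r-1)$-contribution, and identify the residue at $s = r - 2$ with the correct sign and normalisation, which involves some functional-equation-of-$\zeta$ bookkeeping. A secondary subtlety is the verification of the non-negativity/monotonicity hypothesis required to apply Ingham's theorem to the difference $M_r^+(N) - N_r^+(N)$: for $r = 1$ this is immediate from the Andrews--Chan--Kim identity $M_1^+(N) - N_1^+(N) = \ospt(N) \geq 0$, while for general odd $r$ one either appeals to a version of Ingham's theorem requiring only individual regularity of $M_r^+(N)$ and $N_r^+(N)$, or one anticipates a positive combinatorial interpretation of the difference in the spirit of Garvan's higher $\spt$-functions \cite{Gar11}.
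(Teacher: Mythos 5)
Your generating-function analysis is essentially sound and runs parallel to the paper's Sections 2--3: your Lambert-series expressions for the positive-moment generating functions are correct, and the two-term expansions you extract via Mellin transforms agree with the paper's constants (your $\lambda_r$ and $\mu_r^C-\mu_r^R$ match $r!\widetilde{c}_r/\sqrt{2\pi}$ and $r!\,\zeta(r-2)\left(1-2^{3-r}\right)/\sqrt{2\pi}$, the paper working instead with symmetrized moments $\mu_r^+,\eta_r^+$ and a Mittag-Leffler decomposition). The gap is in the final step, and it is not a "secondary subtlety": Ingham's Tauberian theorem cannot deliver part \ref{T:maintheorem2:M>N}. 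Applied to the difference $\mathcal{C}_r^+-\mathcal{R}_r^+$, the theorem requires real nonnegative coefficients, and to pass from partial sums to individual coefficients one further needs monotonicity of $M_r^+(N)-N_r^+(N)$ in $N$. For odd $r\geq 3$ neither is known; indeed nonnegativity of the difference is essentially Corollary \ref{C:main}\ref{C:main:M>N}, which in the paper is a \emph{consequence} of the asymptotics, and the existence of a positive combinatorial interpretation (higher $\ospt$-type functions) is explicitly flagged there as open. So invoking Ingham on the difference is circular, and your proposed fallback --- "a version of Ingham's theorem requiring only individual regularity of $M_r^+(N)$ and $N_r^+(N)$" --- cannot exist in the needed form: a Tauberian theorem gives only the leading term with no error rate, and the difference you want is smaller by a factor $N^{-1/2}$ than each of $M_r^+(N)$ and $N_r^+(N)$, so it is invisible after subtracting two leading-order asymptotics. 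Even for part \ref{T:maintheorem2:MsimN} with odd $r\geq 3$ you would need $M_r^+(N)$ and $N_r^+(N)$ to be weakly increasing in $N$ (to apply Ingham to $(1-q)$ times the generating function), which you do not establish; the paper only proves such monotonicity combinatorially in the single case of $\ospt$ (Lemma \ref{L:osptInc}), precisely because that is the only place it relies on a Tauberian argument.

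This is exactly why the paper abandons Tauberian methods for $r\geq 2$ and instead feeds the same near-$q=1$ expansion (Corollary \ref{C:Flrdominant}), together with an exponentially small bound away from the pole (Corollary \ref{C:Flraway}), into Wright's variant of the Circle Method. Wright's method requires no positivity or monotonicity hypotheses and yields coefficient expansions with power-saving error terms (Theorem \ref{T:symmetrizedtheorem}, in terms of Bessel functions), so the $\ell=1$ and $\ell=3$ expansions can legitimately be subtracted to isolate the second-order term; the cases $r=1$ and $r=2$ are then handled separately (via $\ospt$ and \cite{BM09}). To repair your argument you would either need to upgrade your radial $t\to 0^+$ expansion to a contour/circle-method estimate of this kind, or supply the missing positivity and monotonicity input for the differences --- the latter being an open combinatorial problem. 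A smaller additional issue: your contour-shifting step must be redone separately for small $r$ (e.g.\ $r=3$, where $s=r-2=1$ interacts with the alternating zeta factor), which is where the paper needs the Appendix estimates; this is routine but not free.
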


As a corollary, we obtain a generalization of \eqref{E:MsimN} and an asymptotic version of \eqref{E:M>N} for all positive crank and rank moments.
\begin{corollary}
\label{C:main}
Suppose that $r\in\N$.
\begin{enumerate}
\item
\label{C:main:MsimN}
As $N \to \infty$,
\begin{equation*}	
M_r^+ (N) \sim N_r^+ (N).
\end{equation*}
\item
\label{C:main:M>N}
There exists $n_r \in \N$ such that if $N \geq n_r$, then
\begin{equation*}
M_r^+(N) > N_r^+(N).
\end{equation*}
\end{enumerate}
\end{corollary}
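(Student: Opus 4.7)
The plan is to derive both parts of Corollary \ref{C:main} as nearly immediate consequences of Theorem \ref{T:maintheorem2}; the only content that requires care is a sign check on the constant $\delta_r$.

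For part (\ref{C:main:MsimN}), I would apply the two asymptotics supplied by part (\ref{T:maintheorem2:MsimN}) of Theorem \ref{T:maintheorem2}, namely $M_r^+(N) \sim \gamma_r N^{r/2 - 1} e^{\pi\sqrt{2N/3}}$ and $N_r^+(N) \sim \gamma_r N^{r/2 - 1} e^{\pi\sqrt{2N/3}}$, and simply divide one by the other; since the leading terms coincide, the ratio tends to $1$, which is the claimed asymptotic equality.

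For part (\ref{C:main:M>N}), I would appeal to part (\ref{T:maintheorem2:M>N}) of Theorem \ref{T:maintheorem2}, which gives $M_r^+(N) - N_r^+(N) \sim \delta_r N^{r/2 - 3/2} e^{\pi\sqrt{2N/3}}$. Positivity of the difference for all sufficiently large $N$ is therefore equivalent to $\delta_r > 0$. This in turn reduces to a sign analysis of $\zeta(r-2)(1 - 2^{3-r})$, handled case by case: for $r = 1$, both $\zeta(-1) = -\tfrac{1}{12}$ and $1 - 2^2 = -3$ are negative, so their product is positive; for $r = 2$, both $\zeta(0) = -\tfrac{1}{2}$ and $1 - 2 = -1$ are negative; and for $r \geq 4$, both factors are positive.

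The one delicate case, and the step I expect to require the most care, is $r = 3$. Here $\zeta(r-2) = \zeta(1)$ diverges while $1 - 2^{3-r} = 0$, so the symbolic value of $\delta_3$ is indeterminate and must be interpreted through the Dirichlet eta function $\eta(s) = (1 - 2^{1-s})\zeta(s)$ evaluated at $s = r - 2 = 1$. Using $\eta(1) = \log 2 > 0$ confirms $\delta_3 > 0$, completing the positivity check. An integer $n_r$ with the required property then exists for every $r \in \N$, yielding the corollary.
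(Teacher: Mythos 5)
Your proposal is correct and matches the paper's (largely implicit) argument: the corollary is intended as an immediate consequence of Theorem \ref{T:maintheorem2}, with part \ref{C:main:MsimN} following from the shared leading term and part \ref{C:main:M>N} from positivity of $\delta_r$. Your case analysis, including reading $\zeta(r-2)\left(1-2^{3-r}\right)$ at $r=3$ as the limit value $\log 2$ of the alternating zeta function, is exactly the interpretation the paper uses (cf.\ the explicit values $\delta_1, \delta_2, \delta_3$ in the first remark after Corollary \ref{C:main} and the limit convention stated after the evaluation of $g_{\ell,j}(0)$), so nothing further is needed.
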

\begin{remarks*}
{\it 1.}
For certain small $r$, the expressions for $\gamma_r$ and $\delta_r$ given in Theorem \ref{T:maintheorem2} must be evaluated using the analytic continuation of the Riemann zeta function. In particular,
\begin{gather*}
\gamma_1 = \frac{\log(2)}{2\sqrt{2} \pi}, \\
\delta_3 = \frac{3\sqrt{3} \log(2)}{\pi^2}, \qquad
\delta_2 = \frac{1}{2\sqrt{2} \: \pi}, \qquad \text{ and } \quad
\delta_1 = \frac{1}{16 \sqrt{3}}.
\end{gather*}
Although the overall result can be stated uniformly, these cases must be dealt with separately in our proofs. \\
\noindent {\it 2.}
If $r$ is even, these expressions are equivalent to those shown in \cite{BMR10}.  In their detailed study of the case $r=1$, Andrews, Chan, and Kim also proved (Theorem 6 in \cite{ACK12}) that
\begin{equation}
\label{E:M1d}
M_1^+(N) = \sum_{\lambda \vdash N} d(\lambda)
\end{equation}
where $d(\lambda)$ denotes the size of the {\it Durfee square} of a partition $\lambda$ and the summation is over all partitions of $N$.
This sum was previously studied by Canfield, Corteel, and Savage, who showed that (Corollary 3 of \cite{CCS98})
\begin{equation}
\label{E:dlambda}
\sum_{\lambda \vdash N} d(\lambda) \sim \frac{\sqrt{6} \log(2)}{\pi} \sqrt{N} p(N),
\end{equation}
which is equivalent to our asymptotic formula for $M_1^+(N).$ \\
\noindent {\it 3.} The relative error in both asymptotic expressions in Theorem \ref{T:maintheorem2} is of order $N^{-\frac{1}{2}}$.  In fact, our proof allows us to write an asymptotic expansion of the form
\begin{equation*}
M_r^+(N) = \gamma_r N^{\frac{r}{2}-1} e^{\pi\sqrt{\frac{2N}3}} \left(1 + \sum_{j = 1}^S b_j N^{-\frac{j}{2}} + O\left(N^{-\frac{S+1}{2}}\right)\right)
\end{equation*}
for any $S \geq 0$, and the same is true of the rank moments (and hence also for the differences). \\
\noindent {\it 4.}
In parallel work, Diaconis, Janson, and Rhoades \cite{DJR12} have also independently proven the main term for the positive rank moments as found in Theorem \ref{T:maintheorem2} part \ref{T:maintheorem2:MsimN}.  Their main tool was the Method of Moments, which allows the determination of the limiting distribution (in the weak sense) of the partition rank statistic as the size of the partitions grow to infinity. \\
\noindent {\it 5.} Numerical data suggests that the inequalities in Corollary \ref{C:main} part \ref{C:main:M>N} are true for all $N$, but although the constants $n_r$ can be made effective for any $r$, a uniform proof of nonasymptotic inequalities remains out of reach of our methods.  It would be of great interest to obtain a combinatorial proof, and particularly to find analogues of Garvan's higher spt-functions from \cite{Gar11} for the odd positive moments.
\end{remarks*}

It is notable that many of the previous proofs for the asymptotic behavior and arithmetic properties of even crank and rank moments relied on the modular and automorphic properties of their generating functions \eqref{E:Cgen} and \eqref{E:Rgen}.  The positive moments do not share these properties, as the singly-infinite summations in Definition \ref{D:MN+} are qualitatively lacking the symmetry of the full moments in \eqref{E:defMr} and \eqref{E:defNr}. The effect of this is seen in the presence of so-called {\it false} theta functions throughout our proofs.  We have therefore developed an entirely different approach, using Mittag-Leffler theory, the Mellin transform, and a variant of the Hardy-Ramanujan Circle Method due to Wright \cite{Wri41} in order to study the generating functions of the positive moments directly.

Wright's approach is not widely known, and gives much weaker results than Hardy and Ramanujan (whose technique was subsequently refined by Rademacher in \cite{Rad37}) in the study of the coefficients of modular forms.  Indeed, following recent further work \cite{BM11}, one can obtain formulas for the coefficients of hypergeometric series that satisfy mock modular or mock Jacobi transformation properties.  This was also previously used by the authors and Rhoades in order to obtain asymptotic series for even crank and moments with polynomial error in \cite{BMR12}.  However, these techniques do not apply to the positive moments, and the advantage of Wright's approach is that it is equally effective on any function with an asymptotic expansion.  As such, his method lies somewhere between the Hardy-Ramanujan Circle Method and Tauberian theorems (which give the asymptotic behavior of the coefficients of a power series based solely on its analytic behavior near a single singularity).  Wright's approach is powerful enough to provide an asymptotic expansion for the coefficients, but flexible enough that it applies to nonmodular generating functions such as the positive crank and rank moments.

There has also been particular interest in the smallest nontrivial crank and rank moments.
In \cite{And07}, Andrews defined the smallest parts function $\spt(N)$ as the
sum of the total number of appearances of the
smallest part in each integer
partition of $N$, and proved the surprising fact that
\begin{equation*}
\spt(N) = \frac{1}{2}\left(M_2(N) - N_2(N)\right) = M_2^+(N) - N_2^+(N).
\end{equation*}
Note that the second moments are the smallest nonzero case of the full moments.
Using Hardy and Ramanujan's famous asymptotic formula
\begin{equation}\label{pasymp}
p(N)\sim \frac{1}{4\sqrt{3}N} e^{\pi\sqrt{\frac{2N}{3}}},
\end{equation}
the $r=2$ case of Theorem \ref{T:maintheorem2}  \ref{T:maintheorem2:M>N} therefore implies that

\begin{equation*}
\spt(N) \sim \frac{\sqrt{6}}{\pi} \sqrt{N} p(N) \sim \frac{1}{2\sqrt{2} \pi \sqrt{N}} e^{\pi \sqrt{\frac{2N}{3}}},
\end{equation*}
which was first proven in \cite{B08}.

The smallest parts function has also been studied for its arithmetic properties, and satisfies three linear congruences modulo $5, 7$, and $13$ that are remarkably similar to \eqref{E:Ram} \cite{And07}.  The parity of $\spt(N)$ is equally striking, as it is entirely dictated by certain quadratic characters on the prime factorization, as was recently proven by Andrews, Garvan, and Liang \cite{AGL12}.

In the course of their study of the positive crank and rank moments, Andrews, Chan, and Kim
defined a natural counterpart to $\spt(N)$ for the first moments, setting
\begin{equation*}
\ospt(N) := M_1^+(N) - N_1^+(N).
\end{equation*}
They then proved the positivity of $\ospt(N)$ (cf. \eqref{E:M1>N1}) by showing that $\ospt(N)$ in fact counts a certain statistic on partitions.  In particular, they defined $\ST(\lambda)$ as the number of even and odd strings in a partition $\lambda$ and proved (Theorem 4 in \cite{AGL12})
\begin{equation}
\label{E:osptST}
\ospt(N) = \sum_{\lambda \vdash N} \ST(\lambda).
\end{equation}
The precise definition of $\ST$ is somewhat technical, so we do not restate it until later in this paper, where we also derive some important combinatorial properties.  Most notably, we will show that $\ospt(N)$ is a weakly increasing function in $N$.

Our next results describe the asymptotic behavior of $\ospt(N)$.
\begin{theorem}
\label{T:osptAsym}
As $N \to \infty$,
\begin{equation*}
\ospt(N) \sim \frac{1}{4} \: p(N) \sim \frac{1}{16\sqrt{3} N} e^{\pi \sqrt{\frac{2 N}{3}}}.
\end{equation*}
\end{theorem}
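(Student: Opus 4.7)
The plan is to derive Theorem \ref{T:osptAsym} as a direct consequence of the $r=1$ case of Theorem \ref{T:maintheorem2}\ref{T:maintheorem2:M>N}, together with the Hardy-Ramanujan asymptotic \eqref{pasymp}. In particular, nothing beyond the main theorem already proved earlier is required.

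Recall that $\ospt(N) = M_1^+(N) - N_1^+(N)$. Specializing Theorem \ref{T:maintheorem2}\ref{T:maintheorem2:M>N} to $r=1$ yields
\begin{equation*}
\ospt(N) \;\sim\; \delta_1 \cdot N^{-1}\, e^{\pi\sqrt{\frac{2N}{3}}}.
\end{equation*}
The constant $\delta_1$ must be evaluated through the analytic continuation of the Riemann zeta function, since the general expression involves $\zeta(r-2) = \zeta(-1)$ when $r=1$. Using the classical value $\zeta(-1) = -\frac{1}{12}$, one computes
\begin{equation*}
\delta_1 \;=\; 1!\cdot\left(-\frac{1}{12}\right)\cdot(1-2^2)\cdot\frac{6^{0}}{4\sqrt{3}\,\pi^{0}} \;=\; \frac{1}{16\sqrt{3}},
\end{equation*}
in agreement with the value tabulated in the first remark following Corollary \ref{C:main}. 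This establishes the second asymptotic equivalence in the statement.

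For the first equivalence, dividing both sides of the Hardy-Ramanujan formula \eqref{pasymp} by $4$ gives
\begin{equation*}
\frac{1}{4}\,p(N) \;\sim\; \frac{1}{16\sqrt{3}\,N}\,e^{\pi\sqrt{\frac{2N}{3}}},
\end{equation*}
and transitivity of $\sim$ immediately yields $\ospt(N) \sim \frac{1}{4}\,p(N)$, completing the proof.

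There is no genuine obstacle here, since all the analytic work is carried out in Theorem \ref{T:maintheorem2}. The only point requiring care is the boundary case $r=1$, where the symbolic formula for $\delta_r$ appears at first glance to need special treatment: however, both $\zeta(-1)=-\frac{1}{12}$ and $1-2^{3-r}=-3$ are finite and nonzero at $r=1$, so the general formula unambiguously produces the required constant $1/(16\sqrt{3})$.
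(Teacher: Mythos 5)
Your argument is circular in the context of this paper: Theorem \ref{T:osptAsym} is not a consequence of the $r=1$ case of Theorem \ref{T:maintheorem2}\ref{T:maintheorem2:M>N} --- in the paper it \emph{is} that case. The analytic machinery that proves Theorem \ref{T:maintheorem2} (namely Theorem \ref{T:symmetrizedtheorem}, via Proposition \ref{P:Slr}, Corollary \ref{C:Flrdominant}, and Wright's Circle Method) is only established for $r \geq 2$ (indeed the asymptotic analysis is carried out for $r \geq 3$, with $r=2$ quoted from \cite{BM09}). In the proof of Theorem \ref{T:maintheorem2} from Theorem \ref{T:symmetrizedtheorem}, the paper explicitly disposes of $r=1$ by declaring that part \ref{T:maintheorem2:M>N} in that case is ``an equivalent restatement'' of Theorem \ref{T:osptAsym}, which is then proved independently in Section \ref{S:ospt}; part \ref{T:maintheorem2:MsimN} for $r=1$ is instead deduced from \eqref{E:M1d} and \eqref{E:dlambda}. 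So when you write that ``all the analytic work is carried out in Theorem \ref{T:maintheorem2}'' and specialize $\delta_r$ at $r=1$ via $\zeta(-1)=-\tfrac{1}{12}$, you are assuming precisely the statement to be proven: the uniform formula for $\delta_1=\tfrac{1}{16\sqrt 3}$ is only \emph{known} to be valid at $r=1$ because of the separate argument (as the first remark after Corollary \ref{C:main} warns, the small-$r$ cases ``must be dealt with separately in our proofs''). Your arithmetic verifying $\delta_1$ and the step $\tfrac14 p(N)\sim \tfrac{1}{16\sqrt 3 N}e^{\pi\sqrt{2N/3}}$ from \eqref{pasymp} are fine, but they do not substitute for a proof of the first equivalence.

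What is actually needed, and what the paper supplies, is a self-contained argument for the $r=1$ difference: starting from the generating function \eqref{E:osptGen}, one shows via the Mittag-Leffler decomposition \eqref{E:Mittag2} and Zagier's asymptotic-expansion technique (Proposition \ref{P:gy}) that $T\left(e^{-y}\right)\to \tfrac14$ as $y\to 0^+$ (Proposition \ref{P:Tq}), proves that $\ospt(N)$ is weakly increasing (Lemma \ref{L:osptInc}, using the string statistic $\ST$ and \eqref{E:osptST}) so that Ingham's Tauberian theorem applies to $(1-q)\mathcal{O}(q)$, and then concludes $\ospt(N)\sim \tfrac14 p(N)$. If you wish to avoid the Tauberian route you would instead have to extend the Circle Method analysis of Sections \ref{S:Asymp}--\ref{S:Circle} down to $r=1$, where the difference of the two moment generating functions is of lower order than the individual main terms and the estimates as written (which require $r\geq 3$) do not apply; either way, some genuine analytic work beyond citing Theorem \ref{T:maintheorem2} is unavoidable.
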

\begin{remark*}
The relationship between $\ospt(N)$ and $p(N)$ is not immediately obvious from the combinatorial definition of even and odd strings in \cite{AGL12}, but in fact, it is likely that with precise asymptotic constants it can be shown that $\ospt(N) < p(N)$ for all $N$.  It would be interesting to obtain a combinatorial proof of this as well.
\end{remark*}

Finally we prove parity results for $\ospt (N)$ that resemble those for $\spt (N)$.
\begin{theorem}
\label{T:ospt2}
If $N \in \N$, then $\ospt(N)$ is odd if and only if $24N - 1 = \ell^{4a+1} m^2$ for some prime $\ell \equiv 23 \pmod{24}$ and positive integers $a, m$ with $(\ell, m) = 1.$
\end{theorem}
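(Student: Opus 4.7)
The plan is to reduce the parity of $\ospt(N)$ to the parity of Andrews's $\spt$-function, for which Andrews, Garvan, and Liang \cite{AGL12} already established precisely the factorization criterion $24N-1 = \ell^{4a+1} m^2$ with $\ell \equiv 23\pmod{24}$ and $(\ell,m)=1$. Thus the entire theorem will follow at once if we can show
\begin{equation*}
\ospt(N) \equiv \spt(N) \pmod 2.
\end{equation*}

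To establish this congruence, I would start from the definitions
\begin{equation*}
\ospt(N) = M_1^+(N) - N_1^+(N), \qquad \spt(N) = M_2^+(N) - N_2^+(N),
\end{equation*}
and compare the two moments of each family. The key observation is the trivial identity $m^2 - m = m(m-1)$, which is an even integer for every $m \in \Z$. Consequently
\begin{equation*}
M_2^+(N) - M_1^+(N) = \sum_{m \in \N} m(m-1)\, M(m,N) \equiv 0 \pmod 2,
\end{equation*}
and by the same argument $N_2^+(N) - N_1^+(N) \equiv 0 \pmod 2$. Subtracting these two congruences gives
\begin{equation*}
\spt(N) - \ospt(N) = \bigl(M_2^+(N) - M_1^+(N)\bigr) - \bigl(N_2^+(N) - N_1^+(N)\bigr) \equiv 0 \pmod 2,
\end{equation*}
which is exactly the desired reduction.

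With $\ospt(N) \equiv \spt(N) \pmod 2$ in hand, I would then invoke the main parity theorem of Andrews, Garvan, and Liang \cite{AGL12}, which asserts that $\spt(N)$ is odd precisely when $24N-1$ admits the factorization $\ell^{4a+1} m^2$ described in the statement. Combining these two facts yields Theorem \ref{T:ospt2}. In this plan there is essentially no obstacle: the only nontrivial input is the deep parity result of Andrews, Garvan, and Liang, while the bridge between $\ospt$ and $\spt$ is the one-line elementary congruence above. The only thing worth being careful about is that the summations in $M_r^+$ and $N_r^+$ run over positive $m$ only, so the term $m=0$ contributes nothing and no anomalous $M(0,1)$ issue enters the calculation.
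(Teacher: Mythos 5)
Your proposal is correct and is essentially identical to the paper's own argument: the paper also proves $\ospt(N) \equiv \spt(N) \pmod 2$ via the observation $m^2 \equiv m \pmod 2$ applied to the positive moments, and then invokes Theorem 1.3 of \cite{AGL12} for the parity of $\spt(N)$. No issues to report.
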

\begin{remark*}
This result is an immediate corollary of a short proof that $\ospt(N) \equiv \spt(N) \pmod{2}$.  The parity condition in the theorem statement is therefore exactly that given for $\spt(N)$ in Theorem 1.3 of \cite{AGL12}.
\end{remark*}

The remainder of the paper is as follows.  In Section \ref{S:Symm} we define symmetrized positive crank and rank moments, give their generating functions, and describe how their study is sufficient to prove our main results.  Next, in Section \ref{S:Asymp} we use the Mittag-Leffler decomposition and Taylor's theorem in order to find the asymptotic behavior of the symmetrized positive moment generating functions.  In Section \ref{S:Circle} we apply Wright's variant of the Hardy-Ramanujan Circle Method to find the asymptotic behavior of the symmetrized positive moments, which proves our main results.  Section \ref{S:ospt} contains a detailed look at the case of the first moments, including the asymptotic behavior and parity of the ospt-function. Finally, one of our main technical tools is relegated to a brief Appendix, in which we apply a method described by Zagier to find asymptotic expansions for several series that arise during the course of the paper.

\section{Symmetrized moments}
\label{S:Symm}

Just as in Andrews' study of Durfee symbols  \cite{And07} and Garvan's work on higher spt-functions \cite{Gar11}, it is much more convenient to work primarily with {\it symmetrized} positive moments rather than with Definition \ref{D:MN+}.  Modifying Andrews' and Garvan's notation in the natural way, we set
\begin{align*}
\mu_r^+ (N) & := \sum_{m\in \N}\binom{m+ \left\lfloor \frac{r-1}2 \right\rfloor}{r} M(m,N) , \\
\eta_r^+ (N) & := \sum_{m\in \N} \binom{m+ \left\lfloor \frac{r-1}2 \right\rfloor}{r} N(m,N) .
\end{align*}

\subsection{Main results for symmetrized positive moments}
Our main asymptotic results in Theorem \ref{T:maintheorem2} are consequences of the following result for symmetrized moments, in which we write the leading terms using modified Bessel functions (following the standard notation $I_\nu(x)$). Using Bessel functions rather than exponentials makes it somewhat easier to compare the two moment functions, as the constants are simpler.  There is a small difference in some of the lower order terms that depends on the parity of $r$, although we always suppress the dependence on $r$ in defining
\begin{equation*}
\rho = \rho(r) := \begin{cases} 0 \qquad & \text{if } r \text{ is odd}, \\
\frac{1}{2} & \text{if } r \text{ is even}. \end{cases}
\end{equation*}

\begin{theorem}
\label{T:symmetrizedtheorem}
Suppose that $r \geq 2$.
As $N \to \infty$,
\begin{align*}
\mu_r^+ (N) &=   c_r N^{\frac{r}{2}-\frac34}\text{I}_{r-\frac32}\left(\pi\sqrt{\frac{2N}{3}}\right)
+ d_{1, r} N^{\frac{r}{2}-\frac54}\text{I}_{r-\frac52}\left(\pi\sqrt{\frac{2N}{3}}\right) + O\left(N^{\frac{r}{2}-\frac74} e^{\pi\sqrt{\frac{2N}{3}}}\right), \\
\eta_r^+ (N) &=   c_r N^{\frac{r}{2}-\frac34}\text{I}_{r-\frac32}\left(\pi\sqrt{\frac{2N}{3}}\right)
+ d_{3, r} N^{\frac{r}{2}-\frac54}\text{I}_{r-\frac52}\left(\pi\sqrt{\frac{2N}{3}}\right) + O\left(N^{\frac{r}{2}-\frac74} e^{\pi\sqrt{\frac{2N}{3}}}\right),
\end{align*}
where the constants $c_r$ and $d_{\ell, r}$ are given by
\begin{align*}
c_r &:= \zeta (r) \left( 1- 2^{1-r}\right) \frac{6^{\frac{r}2 - \frac{3}{4}}}{\sqrt{2} \pi^{r-1}},\\
d_{\ell, r} &:= - \frac{\pi}{24\sqrt{6}} \: c_r
- \frac{6^{\frac{r}{2} - \frac{5}{4}}}{\sqrt{2} \pi^{r-2}}
\left(\frac{\ell}{2} \zeta(r-2) \left(1 - 2^{3-r}\right) + \rho \zeta(r-1) \left(1 - 2^{1-r}\right) \right).
\end{align*}
\end{theorem}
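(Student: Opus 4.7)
The plan is to follow the three-step strategy outlined by the authors: derive generating functions for the symmetrized positive moments $\mu_r^+(N)$ and $\eta_r^+(N)$, determine their asymptotic behavior as $q \to 1$, and then apply Wright's variant of the Circle Method. Starting from the two-variable generating series $C(w;q)$ and $R(w;q)$, the binomial weight $\binom{m+\lfloor(r-1)/2\rfloor}{r}$ arises naturally from Taylor-expanding around $w=1$. However, because the sum is restricted to $m\geq 1$, one must extract the ``positive part'' of the Appell--Lerch-type sums in \eqref{E:Cgen} and \eqref{E:Rgen}. This breaks the bilateral symmetry enjoyed by the full moments and produces false-theta-like series, which is the technical reason the authors abandon modular techniques in favor of a direct asymptotic approach.

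Setting $q = e^{-t}$ with $t \to 0^+$, the core task is to expand these false-theta-like series to enough precision to pin down the first two nontrivial terms. I would perform a Mittag--Leffler decomposition of each summand $(1-wq^n)^{-1}$ about $w=1$ to isolate its residue structure, then apply a Mellin transform to the resulting sums of shape $\sum_{n\geq 1} g(nt)$. Shifting the Mellin contour past its poles produces asymptotic expansions whose coefficients are values of $\zeta$ at small integer arguments (the small-$r$ cases flagged in Remark 1 correspond to poles that require the analytic continuation of $\zeta$). Multiplying by the standard modular asymptotic
\begin{equation*}
\frac{1}{(q)_\infty} \sim \sqrt{\frac{t}{2\pi}}\,\exp\!\left(\frac{\pi^2}{6t} - \frac{t}{24}\right)
\end{equation*}
yields the full asymptotic expansion of the generating functions as $t \to 0^+$.

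With the generating function asymptotic in hand, Wright's method evaluates the Cauchy coefficient integral on a circle $|q| = e^{-t}$, with the saddle tuned to $t = \pi/\sqrt{6N}$. The contribution near $q=1$ is converted to a Bessel function via the Hankel representation of $I_\nu$, which turns each factor $t^{-a}\exp(\pi^2/(6t))$ into a multiple of $N^{(a-1)/2}\,I_{a-1}(\pi\sqrt{2N/3})$. The complementary arc contributes only an exponentially smaller error after a routine estimate. The constant $c_r$ emerges from the common leading terms of the crank and rank expansions, the constants $d_{1,r}$ and $d_{3,r}$ emerge from their (different) subleading terms, and the parity-dependent quantity $\rho$ reflects a slight mismatch between even and odd $r$ in how the binomial shift interacts with the residue at $w = 1$.

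The main obstacle will be Step 2: extracting the subleading asymptotic of the false theta sums with sufficient precision, since this is precisely where the crank--rank discrepancy lives and where all the nontrivial content of the theorem is encoded. Concretely, one must control asymptotics of sums like
\begin{equation*}
\sum_{n \geq 1} \frac{(-1)^n q^{n(n+1)/2}}{(1-q^n)^k} \qquad \text{and} \qquad \sum_{n \geq 1} \frac{(-1)^n q^{n(3n+1)/2}}{(1-q^n)^k},
\end{equation*}
which are exactly the series the authors handle via Zagier's method in the Appendix. Tracking constants through the binomial, Mellin, and Bessel steps is routine but delicate, especially in the small-$r$ cases where ordinary $\zeta$-values are replaced by their analytic continuations.
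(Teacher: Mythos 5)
Your proposal follows essentially the same route as the paper: write the symmetrized positive moments via the false Appell-type sums $S_{\ell,r}(q)$ divided by $(q)_\infty$, expand $S_{\ell,r}$ near $q=1$ by a Mittag--Leffler partial fraction decomposition together with Mellin/Zagier-type asymptotics (producing the alternating zeta values $\zeta(j)(1-2^{1-j})$), combine with the eta-function transformation for $1/(q)_\infty$, and then run Wright's circle method, converting each $t^{-a}e^{\pi^2/(6t)}$ term into $N^{(a-1)/2}I_{a-1}\bigl(\pi\sqrt{2N/3}\bigr)$ via a Hankel-contour representation while the complementary arc is exponentially small. The only points you leave implicit are ones of detail rather than of method: the expansions must be uniform on the complex main arc $|x|\le y$ (this is exactly what forces the separate treatment of the small-index sums $g_{\ell,j}$, $j=-1,0,1$), and the case $r=2$ is not covered by the circle-method argument itself but is quoted from earlier work.
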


\begin{proof}[Proof of Theorem \ref{T:maintheorem2} from Theorem \ref{T:symmetrizedtheorem}]
We first consider the case $r=1$, and note that Theorem \ref{T:osptAsym} is an equivalent restatement of Theorem \ref{T:maintheorem2} \ref{T:maintheorem2:M>N}.  Combined with \eqref{E:M1d} and \eqref{E:dlambda}, we can then also conclude Theorem \ref{T:maintheorem2} \ref{T:maintheorem2:MsimN}.

We thus assume that $r \geq 2$, and use Theorem \ref{T:symmetrizedtheorem}.  Observe (cf. Garvan's approach to even moments in \cite{Gar11}) that
\begin{align}
\label{E:MmuNeta}
M_r^+ (N) & = r! \mu_r^+ (N) + \sum_{\ell=0}^{r-1} a_\ell \: \mu_\ell^+ (N), \\
N_r^+ (N) & = r! \eta_r^+ (N) + \sum_{\ell=0}^{r-1} a_\ell \: \eta_\ell^+ (N), \notag
\end{align}
for certain integers $a_\ell$.  In order to obtain the asymptotic formulas as stated, we use the following well-known asymptotic formula for the modified Bessel functions as the argument $x \to \infty$ (which hold for any index $\nu$ (see (4.12.7) in \cite{AAR99}):
\begin{equation*}
I_\nu(x) = \frac{e^x}{\sqrt{2 \pi x}} + O\left(\frac{e^x}{x^{\frac32}}\right).
\end{equation*}
This immediately gives part \ref{T:maintheorem2:MsimN}.

For the difference of the crank and rank moments, we subtract the second line of \eqref{E:MmuNeta} from the first, obtaining
\begin{align*}
\label{E:MmuNeta}
M_r^+ (N) - N_r^+ (N) & = r! \left(\mu_r^+ (N) - \eta_r^+ (N)\right)
+ \sum_{\ell=0}^{r-1} a_\ell \: \left(\mu_\ell^+ (N) - \eta_\ell^+ (N)\right).
\end{align*}
Theorem \ref{T:symmetrizedtheorem} implies that every term in the sum is at most $O\left(N^{\frac{r}{2} - 2} e^{\pi \sqrt{\frac{2 N}{3}}}\right),$
and simplifying the first term gives part \ref{T:maintheorem2:M>N}.
\end{proof}

\subsection{Generating functions}
We now derive useful expressions for the generating functions of symmetrized positive moments, which we will use to analytically study the asymptotic behavior.  If $\ell, r \in\N$, define a ``false'' Appell-type sum by
\begin{equation}
\label{E:Slr}
S_{\ell, r} (q) := \sum_{n\geq 1} \frac{(-1)^{n+1} q^{\frac{\ell n^2}2 + \left(\frac{r}2 +\rho \right)n}}{\left( 1-q^n\right)^r}.
\end{equation}
We further set
$$
F_{\ell, r} (q) = \sum_{N \geq 0} a_{\ell, r} (N) q^N := \frac{1}{(q)_\infty} S_{\ell, r} (q).
$$
\begin{proposition}
Suppose that $r\in\N.$ Then we have
\begin{align*}
\sum_{N\geq 0} \mu_r^+ (N) q^N &=  F_{1,r} (q), \\
\sum_{N\geq 0} \eta_r^+ (N) q^N &=  F_{3,r} (q).
\end{align*}
\end{proposition}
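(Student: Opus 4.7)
The idea is to compute the univariate generating function $P(m,q) := \sum_{N \geq 0} M(m,N) q^N$ for each $m \geq 1$ directly from the Appell--Lerch sum in \eqref{E:Cgen}, then sum against $\binom{m+s}{r}$, where $s := \lfloor (r-1)/2 \rfloor$, via the elementary identity
$$\sum_{m \geq 1} \binom{m+s}{r} w^m \;=\; \frac{w^{r-s}}{(1-w)^{r+1}},$$
which holds whenever $0 \leq s < r$, as is the case here. The rank identity will follow by verbatim repetition after swapping \eqref{E:Cgen} for \eqref{E:Rgen}.

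First I would split the $n$-sum in \eqref{E:Cgen} into $n \geq 0$ and $n \leq -1$. For $n \geq 0$ I expand $\frac{1}{1-wq^n} = \sum_{k \geq 0} w^k q^{nk}$, and for $n = -\ell \leq -1$ I use the ``other'' geometric expansion $\frac{1}{1-wq^{-\ell}} = -\sum_{k \geq 1} w^{-k} q^{\ell k}$, which realizes $C(w;q)$ as a Laurent polynomial in $w$ at each order in $q$. Since the negative-$w$ half contributes only to coefficients $[w^{-j}]$ with $j \geq 1$, it is invisible after multiplying by $(1-w)$ and extracting $[w^m]$ with $m \geq 1$. A short telescoping based on $q^{nm}-q^{n(m-1)} = -q^{n(m-1)}(1-q^n)$ then yields, for $m \geq 1$,
$$(q)_\infty\, P(m,q) \;=\; \sum_{n \geq 1} (-1)^{n+1}(1-q^n)\, q^{n(n-1)/2 + nm},$$
with the $n=0$ contribution vanishing automatically.

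Next I would multiply by $\binom{m+s}{r}$, sum over $m \geq 1$, and interchange the two summations. The binomial identity above, applied with $w = q^n$, collapses the inner $m$-sum to $q^{n(r-s)}/(1-q^n)^{r+1}$, and one factor of $(1-q^n)$ cancels against the one already present, producing
$$\sum_{N \geq 0} \mu_r^+(N) q^N \;=\; \frac{1}{(q)_\infty}\sum_{n \geq 1}\frac{(-1)^{n+1}q^{n(n-1)/2 + n(r-s)}}{(1-q^n)^r}.$$
A short parity check then matches the exponent with that of $S_{1,r}(q)$: for $r$ odd one has $s=(r-1)/2$ and $\rho=0$, while for $r$ even one has $s = r/2-1$ and $\rho=1/2$, and in both cases $\tfrac{n(n-1)}{2}+n(r-s) = \tfrac{n^2}{2} + \bigl(\tfrac{r}{2}+\rho\bigr) n$.

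The rank identity is the verbatim repetition of this argument starting from \eqref{E:Rgen}: only $q^{n(n+1)/2}$ is replaced by $q^{n(3n+1)/2}$, which after the same manipulations shifts the quadratic exponent from $\tfrac{n^2}{2}$ to $\tfrac{3n^2}{2}$ while leaving the linear term in $n$ unchanged, producing $F_{3,r}(q)$. No step is genuinely hard; the only points requiring care are the asymmetric formal expansion in $w$ (so that each $q$-coefficient is a well-defined Laurent polynomial in $w$ before coefficient extraction) and the small parity bookkeeping needed to fold the odd and even cases into the single $\rho$-dependent exponent appearing in the definition of $S_{\ell,r}(q)$.
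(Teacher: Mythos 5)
Your argument is correct, and in fact it supplies more than the paper does: the paper omits the proof of this proposition entirely, simply citing the analogous $r=1$ case (Theorem 1 of \cite{AGL12}) and the even-$r$ case (Theorem 2 of \cite{And07}). Those cited proofs proceed along essentially the same lines as yours—expand the Appell--Lerch sums in \eqref{E:Cgen} and \eqref{E:Rgen} with the standard convention $|q|<|w|<|q|^{-1}$ and sum the $w$-coefficients against the binomial weights—so what your write-up buys is a single uniform treatment of all $r$ via $\sum_{m\geq 1}\binom{m+s}{r}w^m=\frac{w^{r-s}}{(1-w)^{r+1}}$ with $s=\lfloor\frac{r-1}{2}\rfloor$, rather than separate odd/even arguments. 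The two points that genuinely require care are exactly the ones you flag: (i) the asymmetric Laurent expansion of the bilateral sum (nonnegative powers of $w$ for $n\geq 0$, negative powers for $n\leq -1$), which ensures the $n\leq -1$ half contributes only powers $w^{-j}$ with $j\geq 0$ after multiplication by $(1-w)$ and is therefore annihilated by extracting $[w^m]$ with $m\geq 1$, and the $n=0$ term telescopes to $1$; and (ii) the parity bookkeeping, where your verified identity $\frac{n(n-1)}{2}+n(r-s)=\frac{n^2}{2}+\left(\frac{r}{2}+\rho\right)n$, i.e.\ $r-s=\frac{r}{2}+\rho+\frac12$, is the correct form—the paper's parenthetical remark that $r-\lfloor\frac{r-1}{2}\rfloor=\frac{r}{2}+\rho$ is off by $\frac12$ as literally stated, and your computation in effect corrects it. One cosmetic caveat worth a sentence in a final write-up: because of the $N=1$ anomaly for the crank, the identity is for the coefficients $M(m,N)$ of $C(w;q)$ rather than the combinatorial crank counts at $N=1$, which is the same convention the paper adopts.
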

\begin{remark*}
In other words, $\mu_r^+ (N) = a_{1, r}(N)$ and $\eta_r^+(N) = a_{3,r}(N).$
\end{remark*}
\noindent We do not provide a proof of these formulas, as they are entirely analogous to the proof for the case $r=1$ as found in Theorem 1 of \cite{AGL12} and the case of even $r$ found in Theorem 2 of \cite{And07}.  In order to write uniform formulas for all $r$, we use the fact that $r - \lfloor \frac{r-1}{2} \rfloor = \frac{r}{2} + \rho.$

\section{Asymptotic behavior of generating functions}
\label{S:Asymp}

In this section, we describe the asymptotic behavior of the generating functions $F_{\ell,r}(q)$ when $q$ is near an essential singularity on the unit circle.  We therefore adopt the notation commonly used with modular forms and write $q = e^{2 \pi i \tau}$, where $\tau = x + iy$ and $y > 0$.  The overall asymptotic behavior is largely controlled by the exponential singularities of $(q;q)_\infty^{-1}$, which can be easily understood through modular transformations.  The dominant pole is at $q = 1$, and the primary technical challenge is therefore to understand the asymptotic behavior of the sum $S_{\ell, r}(q)$ near this point.

\subsection{Bounds near the dominant pole}
\label{S:Asymp:dominant}
In order to identify the nature of the pole at $q = 1$ for $S_{\ell, r}(q)$, we apply the Mittag-Leffler partial fraction decomposition.
It is straightforward to verify that for $w\in\C$
\begin{equation}\label{ML}
\frac{e^{\pi iw}}{\left(1-e^{2\pi iw}\right)^r}
=\frac{1}{(-2\pi iw)^r}+\sum\limits_{0<j<r\atop{j\equiv r\pmod{2}}}\frac{\alpha_j}{(-2\pi iw)^j}
+\sum\limits_{0<j\leq r\atop{j\equiv r\pmod{2}}}\frac{\alpha_j}{(-2\pi i)^j}\sum_{m\geq 1}\left[\frac{1}{(w-m)^j}+\frac{1}{(w+m)^j}\right],
\end{equation}
where $\alpha_j$ are certain constants; note that $\alpha_r = 1$.
By summing this expansion over all $n$ (with $w=n\tau$), we obtain
\begin{align}
\label{E:SMittag}
S_{\ell, r}(q)=\sum_{n\geq 1}(-1)^{n+1}q^{\frac{\ell n^2}{2}+\rho n}
\Bigg[& \frac{1}{(-2\pi in\tau)^r}+\sum\limits_{0<j<r\atop{j\equiv r\pmod{2}}}\frac{\alpha_j}{(-2\pi in\tau)^j}\\
& \qquad +\sum\limits_{0<j\leq r\atop{j\equiv r\pmod{2}}}\frac{\alpha_j}{(-2 \pi i)^j} \sum_{m\geq 1}\left(\frac{1}{(n\tau-m)^j}+\frac{1}{(n\tau+m)^j}\right)\Bigg]. \notag
\end{align}

We now use this expression in order to determine the first terms in the asymptotic expansion of $S_{\ell, r}(q)$ in a certain analytic neighborhood of $q = 1$.
\begin{proposition}
\label{P:Slr}
Assume $r\geq 3$, $y=\frac{1}{2\sqrt{6N}}$, and $|x|\leq y$.  As $N \to \infty$, we have
\begin{equation*}
S_{\ell, r}(q)-\widetilde{c}_r(-2\pi i\tau)^{-r}-\widetilde{d}_{\ell, r}(-2\pi i\tau)^{-r+1}\ll N^{\frac{r}{2}-1}
\end{equation*}
with
\begin{align*}
\widetilde{c}_r&:=\zeta(r)\left(1-2^{1-r}\right),\\
\widetilde{d}_{\ell, r}&:= - \frac{\ell}{2}\zeta(r-2)\left(1-2^{3-r}\right)
- \rho\zeta(r-1)\left(1-2^{2-r}\right).
\end{align*}
\end{proposition}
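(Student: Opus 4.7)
The plan is to analyze each of the three groups of terms in the Mittag-Leffler decomposition \eqref{E:SMittag} separately: the leading piece
\[
T_r(q) := \frac{1}{(-2\pi i\tau)^r}\sum_{n\geq 1}\frac{(-1)^{n+1}q^{\ell n^2/2+\rho n}}{n^r},
\]
the sub-main pieces $T_{<r}(q)$ coming from the summation over $0<j<r$ with $j\equiv r\pmod{2}$, and the Mittag-Leffler tail $T_{\mathrm{ML}}(q)$ containing the double sum over $m\ge 1$.

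For $T_r$, I would Taylor-expand $q^{\ell n^2/2+\rho n}=e^{2\pi i\tau(\ell n^2/2+\rho n)}$ in $\tau$. The $k=0$ term yields $(-2\pi i\tau)^{-r}\sum_{n\geq 1}(-1)^{n+1}/n^r=(1-2^{1-r})\zeta(r)(-2\pi i\tau)^{-r}=\widetilde{c}_r(-2\pi i\tau)^{-r}$. The $k=1$ term, after using $2\pi i\tau/(-2\pi i\tau)^r=-(-2\pi i\tau)^{-(r-1)}$, produces $\widetilde{d}_{\ell,r}(-2\pi i\tau)^{-(r-1)}$ via the Dirichlet eta values $\eta(r-2)$ and $\eta(r-1)$. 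For $k\ge 2$, the resulting expressions are of the shape $\tau^{k-r}\sum_{n\geq 1}(-1)^{n+1}q^{\ell n^2/2+\rho n}n^{2k-r}$ (times constants), and are estimated by the Zagier-Mellin asymptotic method developed in the Appendix. The dominant $k=2$ contribution is of size $|\tau|^{-(r-2)}=O(N^{r/2-1})$, and higher $k$ yield strictly smaller orders in $|\tau|$.

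For the sub-main pieces $T_{<r}$, pulling the $(-2\pi i\tau)^{-j}$ factor out reduces matters to estimating $\sum_{n\geq 1}(-1)^{n+1}q^{\ell n^2/2+\rho n}/n^j$, which by the same Zagier-type analysis equals $\eta(j)+o(1)$. Since $j\leq r-2$, this gives $T_{<r}(q)=O(|\tau|^{-(r-2)})=O(N^{r/2-1})$, exactly within the allowed error. For the Mittag-Leffler tail $T_{\mathrm{ML}}$, the identity $|n\tau\pm m|^2=(nx\pm m)^2+n^2y^2$ together with the hypothesis $|x|\le y$ yields $|n\tau\pm m|\geq\max(ny,m/2)$ once $m\geq 2ny$, so that $\sum_{m\ge 1}|(n\tau\pm m)^{-j}|=O((ny)^{1-j})$ for $j\geq 2$. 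Summing over $n$ with the Gaussian damping $|q^{\ell n^2/2+\rho n}|=e^{-\pi y(\ell n^2+2\rho n)}$ produces only a bounded total, which is negligible compared to $N^{r/2-1}$ for $r\geq 3$.

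The main obstacle will be controlling the $k\geq 2$ Taylor terms in $T_r$ (and, more generally, the analogous sums inside $T_{<r}$), that is, obtaining uniform asymptotics for $\sum_{n\geq 1}(-1)^{n+1}q^{\ell n^2/2+\rho n}n^a$ with $a$ a possibly non-negative integer, for $|x|\leq y$. Without the Gaussian damping these sums diverge, so one must carefully invoke Zagier's Mellin-transform technique from the Appendix. Additional care is required in the borderline cases $r=3,4$, where $\zeta(r-2)(1-2^{3-r})$ arises as a $0\cdot\infty$ indeterminate: it must be verified that the analytic continuation produces only finite values and that no extraneous logarithmic factors contaminate the clean bound $N^{r/2-1}$.
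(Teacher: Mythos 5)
Your overall route is the same as the paper's: the Mittag--Leffler decomposition \eqref{E:SMittag}, a two-term expansion of the $j=r$ polar part giving $\widetilde{c}_r$ and $\widetilde{d}_{\ell,r}$ via Dirichlet eta values, Zagier--Mellin (Appendix) bounds for the damped sums of small index, and crude bounds for the lower-order polar terms. However, your treatment of the Mittag--Leffler tail has a genuine gap: you only handle $j\geq 2$, while for odd $r$ (in particular $r=3$) the tail also contains $j=1$ terms, and your method of bounding $\frac{1}{n\tau-m}$ and $\frac{1}{n\tau+m}$ separately in absolute value cannot work there, since $\sum_{m\geq 1}\max(ny,m/2)^{-1}$ diverges. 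One must use the cancellation between the $+m$ and $-m$ terms, e.g. $\frac{1}{n\tau-m}+\frac{1}{n\tau+m}\ll \frac{n|\tau|}{m^2}$, and then bound $|\tau|\sum_{n\geq 1}n e^{-\pi n^2 y}\ll 1$, which is exactly how the paper proceeds (via Proposition \ref{P:j=1Sum}).

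There is also a quantitative slip for $j\geq 2$: from your per-$n$ bound $\sum_{m\geq 1}|n\tau\pm m|^{-j}=O\big((ny)^{1-j}\big)$, summing against the Gaussian damping does \emph{not} give a bounded total; the $n=1$ term alone contributes about $y^{1-j}\asymp N^{\frac{j-1}{2}}$, which for $j=r$ is $N^{\frac{r-1}{2}}$ and exceeds the allowed error $N^{\frac{r}{2}-1}$. The repair is to use the uniform-in-$n$ bound $\sum_{m\geq 1}|n\tau\pm m|^{-j}\ll\sum_{m\geq 1}m^{-j}\ll 1$ (valid since $|n\tau\pm m|\geq m/2$ when $|x|\leq y$), giving a total $\ll\sum_{n\geq 1}e^{-\pi\ell n^2 y}\ll y^{-\frac12}\ll N^{\frac14}$, which is not $O(1)$ but is negligible against $N^{\frac{r}{2}-1}$ for $r\geq 3$. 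Finally, your $k\geq 2$ ``Taylor terms'' of $T_r$, which retain the factor $q^{\frac{\ell n^2}{2}+\rho n}$, are not literally terms of a Taylor expansion; what works (and what the paper does) is Taylor's theorem with a single remainder, which is $\ll|\tau|^{2-r}$ times damped sums of indices $r-4,r-3,r-2$, so the Appendix bounds for indices $-1,0,1$ are precisely what is needed for $3\leq r\leq 5$ --- you correctly flag this dependence, but an expansion to all orders $k$ would require uniform control of sums with arbitrarily negative index, which the Appendix does not supply.
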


\begin{proof}
We first consider the contribution from the first bracketed term in \eqref{E:SMittag}.  This reduces to the study of the functions
\begin{equation}\label{glj}
g_{\ell, j}(\tau):=\sum_{n\geq 1}(-1)^{n+1}n^{-j}q^{\frac{\ell n^2}{2}+\rho n}.
\end{equation}
If $j \geq 1$, then $g_{\ell, j}$ is convergent at $\tau = 0$, with value
\begin{align}
\label{E:g0}
g_{\ell, j}(0) = \sum_{n\geq 1}\frac{(-1)^{n+1}}{ n^{j}}& = \zeta(j)\left(1-2^{1-j}\right), \notag
\end{align}
where this expression is again to be interpreted as a limit when $j=1$.
Note further that if $j\geq 2$, then $g_{\ell, j}$ is absolutely (and uniformly) convergent for all $| q | \leq 1$, since
\begin{equation*}
\Big|g_{\ell, j}(\tau)\Big| \leq \sum_{n\geq 1}\frac{1}{n^{j}}=\zeta(j).
\end{equation*}

We will apply Taylor's Theorem to obtain lower-order asymptotic terms in $g_{\ell, r}$, and make use of the fact that the resulting derivatives can be expressed recursively using
\begin{align*}
\frac{1}{2\pi i}\frac{\partial}{\partial\tau}g_{\ell, j}(\tau)&=\frac{\ell}{2}g_{\ell, j-2}(\tau)+\rho g_{\ell, j-1}(\tau),\\
\left(\frac{1}{2\pi i}\right)^2\frac{\partial^2}{\partial\tau^2}g_{\ell, j}(\tau)&=\frac{\ell^2}{4}g_{\ell, j-4}(\tau)+\ell \rho g_{\ell, j-3}(\tau)
+\rho^2 g_{\ell, j-2}(\tau). \notag
\end{align*}
Taylor's Theorem can now be directly applied for $j\geq 6$ (as all terms are uniformly bounded), giving
\begin{equation}
\label{E:gTaylor}
g_{\ell, j}(\tau) - g_{\ell, j}(0) - g_{\ell, j}'(0)\tau
\ll |\tau|^2\sup_{w\in\mathbb{H}}\Big|g_{\ell, j}''(w)\Big|\ll |\tau|^2\ll N^{-1}.
\end{equation}
The main terms in the proposition statement are given by the first two terms in \eqref{E:gTaylor}, so the remaining task in the proof is to show that the other summands \eqref{E:SMittag} are of smaller order.  Additionally, we need to individually address the small values of $j$.

The second bracketed term in \eqref{E:SMittag} is a sum on $j$, and on these terms we use a simple uniform bound.  In particular,
\begin{align*}
g_{\ell, j}(\tau) \ll \sum_{n\geq 1} \frac{e^{- C n^2y}}{n^j} \ll \begin{cases} 1 \qquad & \text{if } j \geq 2, \\
1+\frac{1}{\sqrt{y}} \ll N^\frac14 \qquad & \text{if } j = 1. \end{cases}
\end{align*}
for some constant $C$.  In the case $j = 1$, we bound the (monotonic) sum by an integral, and the statement follows by the standard Gaussian evaluation.

For the final bracketed term in \eqref{E:SMittag}, a direct calculation shows that for $m\in\Z\setminus\{0\}$ (and values of $\tau$ subject to the constraint $|x| \leq y$)
\begin{equation*}
\frac{1}{n\tau+m} \ll \frac{1}{m}.
\end{equation*}

Thus for $j>1$ we have the bound
\begin{equation}
\label{E:thetabound}
\sum_{n\geq 1}(-1)^n q^{\frac{\ell n^2}{2}+\rho n}\sum_{m\geq 1}\left(\frac{1}{(n\tau-m)^j}+\frac{1}{(n\tau+m)^j}\right)
\ll\sum_{m\geq 1} m^{-j}\sum_{n\geq 1} e^{-\pi \ell n^2y}\ll y^{-\frac12}\ll N^{\frac14}.
\end{equation}
The final summation bound follows again through a comparison with a Gaussian integral.  Alternatively, it can also be shown using the modular inversion formula for the theta function; it also follows from Zagier's formula for asymptotic expansions (cf. Proposition \ref{P:Zag}; also found as Example 2 in Section 4 of \cite{Zag06}).
Moreover, for $j=1$ we have
\begin{equation*}
\frac{1}{n\tau-m}+\frac{1}{n\tau+m} \ll \frac{n\tau}{m^2}.
\end{equation*}
Thus the corresponding contribution can be bounded by
\begin{equation}
\label{E:j=1Int}
|\tau| \sum_{n\geq 1}n e^{-\pi n^2y} \ll N^{-\frac12}\frac{1}{y} \ll 1,
\end{equation}
where the sum is bounded using Proposition \ref{P:j=1Sum} in the Appendix.  This completes the proof for $r \geq 6$.

For the remaining cases $3 \leq r \leq 5$, absolute bounds are not sufficient to show that the Taylor series in \eqref{E:gTaylor} is valid (which requires that the bounds are uniform), so we instead argue directly that the function $g_{\ell, j}$ is uniformly bounded for $j=-1,0,1.$  The details are somewhat involved and are therefore found in Appendix \ref{S:append}, where we follow Zagier's use of a technical tool related to the Mellin transform to study the asymptotic expansions of series such as $g_{\ell, j}$.  See Proposition \ref{P:glj} for this final part of the proof.
\end{proof}

\begin{corollary}
\label{C:Flrdominant}
Assume that $r \geq 3$, $y=\frac{1}{2\sqrt{6N}}$, and $|x|\leq y$.  As $N \to \infty$, we have
\begin{equation*}
F_{\ell, r}(q)
- c_r^\ast(-2 \pi i\tau)^{\frac12-r} e^{\frac{\pi i}{12\tau}}
-d_{\ell, r}^\ast(-2 \pi i\tau)^{\frac32-r}e^{\frac{\pi i}{12\tau}}\ll N^{\frac{r}{2}-\frac54}e^{\pi\sqrt{\frac{N}{6}}},
\end{equation*}
where
\begin{align*}
c_r^\ast & := \frac{\widetilde{c}_r}{\sqrt{2 \pi}}, \qquad \text{ and } \quad
d_{\ell, r}^\ast := \frac{1}{\sqrt{2\pi}}\left(-\frac{\widetilde{c}_r}{24}+\widetilde{d}_{\ell, r}\right).
\end{align*}
\end{corollary}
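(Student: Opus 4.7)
The plan is to combine the asymptotic expansion of $S_{\ell,r}(q)$ supplied by Proposition \ref{P:Slr} with the modular transformation of the Dedekind eta function. Writing $\eta(\tau)=q^{1/24}(q)_\infty$ and using $\eta(-1/\tau)=\sqrt{-i\tau}\,\eta(\tau)$, one gets
\[
\frac{1}{(q)_\infty}=\sqrt{-i\tau}\,e^{\pi i\tau/12}\,e^{\pi i/(12\tau)}\,\frac{1}{(q')_\infty},
\]
where $q'=e^{-2\pi i/\tau}$. Since $\Im(-1/\tau)=y/|\tau|^2\geq 1/(2y)=\sqrt{6N}$ for $|x|\leq y$, we have $|q'|\leq e^{-2\pi\sqrt{6N}}$, so $(q')_\infty^{-1}=1+O(e^{-2\pi\sqrt{6N}})$ may be absorbed into the stated error immediately.

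Next I would multiply the transformation formula by $S_{\ell,r}(q)$ and use the key algebraic identity $\sqrt{-i\tau}\,(-2\pi i\tau)^{-k}=(2\pi)^{-1/2}(-2\pi i\tau)^{1/2-k}$. Applied to the two main terms of Proposition \ref{P:Slr}, this produces exactly $c_r^\ast(-2\pi i\tau)^{1/2-r}e^{\pi i/(12\tau)}$ from the $\widetilde{c}_r$-term, and $\widetilde{d}_{\ell,r}/\sqrt{2\pi}\cdot(-2\pi i\tau)^{3/2-r}e^{\pi i/(12\tau)}$ from the $\widetilde{d}_{\ell,r}$-term, before accounting for $e^{\pi i\tau/12}$. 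Taylor-expanding $e^{\pi i\tau/12}=1+\pi i\tau/12+O(\tau^2)$, the linear correction applied to the leading $(-2\pi i\tau)^{1/2-r}$ term contributes $c_r^\ast\cdot(\pi i\tau/12)\cdot(-2\pi i\tau)^{1/2-r}=-(c_r^\ast/24)(-2\pi i\tau)^{3/2-r}$. Summing the two contributions of size $(-2\pi i\tau)^{3/2-r}$ gives coefficient $(\widetilde{d}_{\ell,r}-\widetilde{c}_r/24)/\sqrt{2\pi}=d_{\ell,r}^\ast$, matching the statement precisely.

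Finally I would bound the error terms. The estimate $|e^{\pi i/(12\tau)}|=e^{\pi y/(12|\tau|^2)}\leq e^{\pi/(12y)}=e^{\pi\sqrt{N/6}}$ (attained near $x=0$) holds throughout the relevant arc, and $|\sqrt{-i\tau}|\ll y^{1/2}\ll N^{-1/4}$. The $O(N^{r/2-1})$ remainder from Proposition \ref{P:Slr}, once multiplied by $|\sqrt{-i\tau}\,e^{\pi i/(12\tau)}|\ll N^{-1/4}e^{\pi\sqrt{N/6}}$, becomes $O(N^{r/2-5/4}e^{\pi\sqrt{N/6}})$. The quadratic Taylor remainder $O(\tau^2)\cdot\widetilde{c}_r(-2\pi i\tau)^{-r}$ contributes at most $|\tau|^{5/2-r}e^{\pi\sqrt{N/6}}\ll N^{r/2-5/4}e^{\pi\sqrt{N/6}}$, and the linear remainder applied to the $\widetilde{d}_{\ell,r}$ term is of the same order.

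The main obstacle is keeping careful bookkeeping of several error sources simultaneously (Proposition \ref{P:Slr}'s remainder, the Taylor remainders for $e^{\pi i\tau/12}$ at two orders, and the $(q')_\infty$ factor) and verifying that each is uniformly dominated by $N^{r/2-5/4}e^{\pi\sqrt{N/6}}$ on the full arc $|x|\leq y$, rather than only on the imaginary axis. Once the worst term $|e^{\pi i/(12\tau)}|$ is shown to attain its bound precisely at this target, however, the remaining arithmetic is routine matching of exponents.
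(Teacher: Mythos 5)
Your proposal is correct and follows essentially the same route as the paper: invert via $\eta(-1/\tau)=\sqrt{-i\tau}\,\eta(\tau)$ to write $(q)_\infty^{-1}=\sqrt{-i\tau}\,e^{\pi i/(12\tau)}\bigl(1+\tfrac{2\pi i\tau}{24}+O(N^{-1})\bigr)$, multiply against the two-term expansion of $S_{\ell,r}(q)$ from Proposition \ref{P:Slr}, and absorb all remainders into $N^{\frac r2-\frac54}e^{\pi\sqrt{N/6}}$. Your bookkeeping of the constants (the identity $\sqrt{-i\tau}\,(-2\pi i\tau)^{-k}=(2\pi)^{-1/2}(-2\pi i\tau)^{\frac12-k}$ and the $-\widetilde{c}_r/24$ correction) and of the error terms matches the paper's computation.
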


\begin{proof}
Recall the modular inversion formula for Dedekind's eta function (Theorem 3.1 in \cite{Apo90}), which states that
\begin{equation}\label{inveta}
\eta\left(-\frac1\tau\right)=\sqrt{-i\tau}\eta(\tau).
\end{equation}
We use this to obtain the expansion
\begin{align}
\label{E:qinfty}
\frac{1}{(q)_\infty}= \frac{q^{\frac{1}{24}}\sqrt{-i\tau}}{\eta\left(-\frac1\tau\right)}
& =\sqrt{-i\tau} e^{\frac{2\pi i}{24}\left(\tau+\frac1\tau\right)}
\Big(1+O\Big(e^{-2\pi \sqrt{6N}}\Big)\Big) \notag \\
& = \sqrt{-i\tau} e^{\frac{\pi i}{12\tau}}\left(1+\frac{2\pi i\tau}{24}+O\left(N^{-1} \right)\right).
\end{align}

Along with Proposition \ref{P:Slr}, this implies the claimed expansion, as
\begin{multline*}
\frac{1}{(q)_\infty} S_{\ell, r}(q)=\sqrt{-i\tau}e^{\frac{\pi i}{12\tau}}\left(1+\frac{2\pi i\tau}{24}+O\left(N^{-1}\right)\right)
\Big(\widetilde{c}_r (-2\pi i\tau)^{-r} + \widetilde{d}_{\ell, r}(-2\pi i\tau)^{-r+1} + O\left(N^{\frac{r}{2}-1}\right)\Big)\\
=\widetilde{c}_r\sqrt{-i\tau}e^{\frac{\pi i}{12\tau}}(-2\pi i\tau)^{-r}
+\left(-\frac{\widetilde{c}_r}{24}+\widetilde{d}_{\ell, r}\right)\sqrt{-i\tau} e^{\frac{\pi i}{12\tau}}
(-2\pi i\tau)^{-r+1}
+O\left(N^{\frac{r}{2}-\frac54}e^{\pi\sqrt{\frac{N}{6}}}\right).
\end{multline*}
\end{proof}

\subsection{Bounds away from the dominant pole}
\label{S:Asymp:away}
We next consider the behavior of $F_{\ell, r}$ when $q$ is not near $1$.  First, we provide a simple uniform bound for $S_{\ell, r}$.
\begin{proposition}
\label{P:Slraway}
If $y=\frac{1}{2\sqrt{6N}}$, then we have for $r\geq 1$
\begin{equation*}
\Big|S_{\ell, r}(q)\Big|\ll N^{\frac{r}{2}+\frac14}.
\end{equation*}
\end{proposition}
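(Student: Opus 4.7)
The plan is to obtain a uniform-in-$x$ bound by estimating each summand of
\[
S_{\ell, r}(q) = \sum_{n \geq 1} \frac{(-1)^{n+1} q^{\frac{\ell n^2}{2} + \left(\frac{r}{2}+\rho\right)n}}{(1-q^n)^r}
\]
in absolute value. Writing $q = e^{2\pi i\tau}$ with $\tau = x + iy$, the reverse triangle inequality gives $|1-q^n| \geq 1 - |q^n| = 1 - e^{-2\pi ny}$, and this lower bound is independent of $x$. Combined with $\left|q^{\frac{\ell n^2}{2}+\left(\frac{r}{2}+\rho\right)n}\right| \leq e^{-\pi \ell n^2 y}$, the task reduces to bounding
\[
\sum_{n \geq 1} \frac{e^{-\pi \ell n^2 y}}{(1-e^{-2\pi n y})^r}.
\]

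I would split this sum at the threshold $n_0 := \lceil 1/(2\pi y)\rceil \asymp \sqrt{N}$. In the range $1 \leq n \leq n_0$, the elementary inequality $1-e^{-t} \geq t/2$ for $t \in [0,1]$ yields $1 - e^{-2\pi n y} \geq \pi n y$, so this range contributes at most
\[
(\pi y)^{-r} \sum_{n=1}^{n_0} \frac{e^{-\pi \ell n^2 y}}{n^r}.
\]
The inner sum is bounded by $\zeta(r) \ll 1$ when $r \geq 2$ and by $\log(1/y) \ll \log N$ when $r = 1$, so using $y^{-1} \ll N^{1/2}$ the contribution of this range is $\ll N^{r/2}$ for $r \geq 2$ and $\ll N^{1/2} \log N \ll N^{3/4}$ for $r = 1$. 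In the complementary range $n > n_0$ we have $2\pi n y > 1$, hence $1 - e^{-2\pi n y} \geq 1 - e^{-1}$ is bounded below by an absolute constant; the remaining contribution is then controlled by the theta-type sum $\sum_{n \geq 1} e^{-\pi \ell n^2 y} \ll y^{-1/2} \ll N^{1/4}$, via Gaussian integral comparison (as already used to establish \eqref{E:thetabound}).

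Summing the two ranges yields $|S_{\ell, r}(q)| \ll N^{r/2} + N^{1/4} \ll N^{r/2 + 1/4}$ uniformly in $x$, with the logarithmic factor in the $r=1$ case absorbed into $N^{3/4}$. There is no substantive obstacle in this argument, only careful bookkeeping to ensure the two ranges combine into a single estimate that is uniform for all $r \geq 1$, which is presumably why the authors describe the bound as simple. The virtue of the proposition lies precisely in its uniformity in $x$: it complements Proposition \ref{P:Slr} by controlling the generating function on the arcs of the subsequent circle method argument that stay away from the dominant pole at $q = 1$.
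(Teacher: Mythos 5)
Your argument is correct, and it even proves a bit more than the proposition asks for, but it is more elaborate than the paper's proof. The paper simply bounds every denominator by the worst case over $n$: since $|1-q^n|\geq 1-|q|$ and $1-|q| \asymp y \asymp N^{-1/2}$, one has
\begin{equation*}
\Big|S_{\ell, r}(q)\Big|\leq \frac{1}{(1-|q|)^r}\sum_{n\geq 1}|q|^{\frac{n^2}{2}} \ll N^{\frac{r}{2}}\cdot N^{\frac14},
\end{equation*}
where the theta sum is estimated by $y^{-1/2}\ll N^{1/4}$ exactly as in \eqref{E:thetabound}; this pairs the worst denominator with the full theta sum and lands directly on the stated exponent $\frac{r}{2}+\frac14$. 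Your refinement splits at $n_0\asymp y^{-1}$, uses $1-e^{-2\pi ny}\gg ny$ for small $n$ and a constant lower bound for large $n$, and so obtains the sharper estimate $N^{r/2}+N^{1/4}$ for $r\geq 2$ (and $N^{1/2}\log N+N^{1/4}$ for $r=1$), which of course implies the claim; the price is the extra bookkeeping, including the minor endpoint issue that at $n=n_0=\lceil 1/(2\pi y)\rceil$ the quantity $t=2\pi ny$ can slightly exceed $1$, which is harmless since $1-e^{-t}\geq t/2$ persists somewhat beyond $t=1$ (or one can take the floor instead of the ceiling). Since the proposition is only used to show that the minor-arc contribution is exponentially smaller than the main term, the extra precision buys nothing here, which is presumably why the authors chose the cruder one-line bound; both proofs are equally uniform in $x$, as each relies only on $|q^n|=e^{-2\pi ny}$.
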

\begin{proof}
Bounding each term in \eqref{E:Slr} absolutely, we find that
\begin{equation*}
\Big|S_{\ell, r}(q)\Big|\leq \frac{1}{(1-|q|)^r}\sum_{n\geq 1}|q|^{\frac{n^2}{2}}\ll N^{\frac{r}{2}}N^{\frac14}=N^{\frac{r}{2}+\frac14},
\end{equation*}
where the final bound follows as in \eqref{E:thetabound}
\end{proof}

This leads to a uniform bound in the region away from $q \sim 1$ that is exponentially smaller than the asymptotic formulas from Section \ref{S:Asymp:dominant}.
\begin{corollary}
\label{C:Flraway}
If $y=\frac{1}{2\sqrt{6N}}$ and $y\leq|x|\leq \frac12$, then we have for $r\geq 1$
\begin{equation*}
\left\lvert F_{\ell, r}(q)\right\rvert \ll N^{\frac{r}{2}+\frac14} e^{\frac{\pi}{2}\sqrt{\frac{N}{6}}}.
\end{equation*}
\end{corollary}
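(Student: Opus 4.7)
The plan is to combine the uniform bound $|S_{\ell,r}(q)| \ll N^{r/2+1/4}$ from Proposition \ref{P:Slraway} with an exponential estimate on $|1/(q)_\infty|$ throughout the minor-arc region $y \le |x| \le 1/2$. Since $F_{\ell,r}(q) = S_{\ell,r}(q)/(q)_\infty$, the corollary follows at once once we establish
\[
\left|\frac{1}{(q)_\infty}\right| \ll e^{\pi/(24y)},
\]
which is precisely $e^{(\pi/2)\sqrt{N/6}}$ for the given choice of $y$.

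For this estimate, we apply the modular inversion \eqref{inveta} to write $1/(q)_\infty = q^{1/24}\sqrt{-i\tau}/\eta(\tau')$ with $\tau' := -1/\tau$, and set $Y := \operatorname{Im}(\tau') = y/(x^2+y^2)$. The reverse triangle inequality $|1-e^{2\pi i n \tau'}| \ge 1 - e^{-2\pi n Y}$ yields $|\eta(\tau')| \ge e^{-\pi Y/12}(e^{-2\pi Y}; e^{-2\pi Y})_\infty$, so it suffices to bound the real Pochhammer symbol from below uniformly in $Y > 0$. Applying the modular inversion a second time at the pure imaginary point $iY$---and handling the regimes $Y \le 1$ and $Y \ge 1$ separately so as to keep the transformed Pochhammer symbol bounded away from zero by an absolute constant---gives
\[
(e^{-2\pi Y}; e^{-2\pi Y})_\infty \gg Y^{-1/2} e^{-\pi/(12Y)}.
\]
Assembling these pieces produces
\[
\left|\frac{1}{(q)_\infty}\right| \ll |\tau|^{1/2}\, Y^{1/2}\, e^{\pi(Y + 1/Y)/12}.
\]

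It remains to check that this does not exceed $e^{\pi/(24y)}$ on the minor arcs. The prefactor simplifies as $|\tau|^{1/2} Y^{1/2} = y^{1/2}(x^2+y^2)^{-1/4}$, which is bounded by $2^{-1/4}$ whenever $|x| \ge y$. For the exponent, set $u := x^2 + y^2$; the function $y/u + u/y$ is convex and attains its minimum at $u = y$, so on the interval $u \in [2y^2, 1/4+y^2]$ corresponding to $y \le |x| \le 1/2$ it is maximized at one of the two endpoints. A direct comparison shows the maximum $1/(2y) + O(y)$ is reached at $u = 2y^2$ (that is, at $|x| = y$), yielding the desired bound. The only delicate step is the uniform lower bound for the real Pochhammer symbol across the large range of $Y$ that arises (roughly $Y \in [4y, 1/(2y)]$), which requires the case distinction noted above but is otherwise routine.
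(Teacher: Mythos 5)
Your proposal is correct, and it follows the same basic route as the paper: write $F_{\ell,r}=S_{\ell,r}/(q)_\infty$, quote Proposition \ref{P:Slraway} for the sum, and use the eta inversion \eqref{inveta} to show $\lvert (q)_\infty\rvert^{-1}\ll e^{\frac{\pi}{24y}}=e^{\frac{\pi}{2}\sqrt{N/6}}$ on the region $y\le|x|\le\frac12$. The difference is one of rigor: the paper's one-line proof writes $\lvert(q)_\infty\rvert^{-1}\ll|\tau|^{\frac12}e^{\frac{\pi y}{12(x^2+y^2)}}$, which tacitly treats the transformed Pochhammer factor $\lvert(\tilde q;\tilde q)_\infty\rvert^{-1}$ (with $\tilde q=e^{-2\pi i/\tau}$) as $O(1)$; that is not uniform near $|x|=\frac12$, where $Y=\mathrm{Im}(-1/\tau)=y/(x^2+y^2)\asymp 4y\to 0$ and this factor grows like $Y^{1/2}e^{\pi/(12Y)}$ (reflecting the genuine singularity of $1/(q)_\infty$ at $q=-1$). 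Your second application of the inversion formula at $iY$, with the split $Y\le 1$ versus $Y\ge 1$, and the endpoint comparison of $\frac{\pi}{12}\left(Y+\frac1Y\right)$ over $Y\in[\approx 4y,\,\frac{1}{2y}]$, supplies exactly the missing uniformity and still lands on the same final bound $e^{\frac{\pi}{24y}}$ (up to the harmless factor $e^{\frac{\pi y}{6}}$), so your argument proves the corollary as stated and in fact patches the step that the paper states too strongly.
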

\begin{proof}
We again use the inversion formula \eqref{inveta} to obtain the bound

\begin{equation*}
\frac{1}{|(q)_\infty|}\ll|\tau|^{\frac12}e^{\frac{\pi y}{12\left(x^2+y^2\right)}}\ll e^{\frac{\pi}{24y}}\ll e^{\frac{\pi}{2}\sqrt{\frac{N}{6}}}.
\end{equation*}
Combined with Proposition \ref{P:Slraway}, this gives the claimed expression.
\end{proof}

\section{The Circle Method}
\label{S:Circle}

In this section we apply Wright's variant of the Hardy-Ramanujan Circle Method and complete the proof of our asymptotic expressions for the positive crank and rank moments.  His approach differs from Hardy-Ramanujan and Rademacher's mainly in the choice of certain parameters (which were reflected in the ranges we considered in Section \ref{S:Asymp}), so the initial setup begins as usual.
Cauchy's Theorem gives an integral representation for the coefficients of $F_{\ell,r}$, namely

\begin{equation}
\label{E:alr}
a_{\ell, r} (N) = \frac1{2\pi i} \int_{\mathcal{C}} \frac{F_{\ell, r} (q)}{q^{N+1}} dq = \int_{-\frac12}^{\frac12} F_{\ell, r} \left( e^{-\frac{\pi}{\sqrt{6N}} + 2\pi i x} \right) e^{\pi\sqrt{\frac{N}6} - 2\pi i Nx} dx,
\end{equation}
where the contour is the counterclockwise traversal of the circle $\mathcal{C} :=\left\{ |q|=e^{-\frac{\pi}{\sqrt{6N}}} \right\}$.  We separate \eqref{E:alr} into two ranges, writing $a_{\ell,r}(N) = \text{I}' + \text{I}'',$ with
\begin{align*}
 \text{I}'  &:=  \int_{|x| \leq \frac1{2\sqrt{6N}}} F_{\ell, r} \left( e^{-\frac{\pi}{\sqrt{6N}} + 2\pi i x} \right) e^{\pi\sqrt{\frac{N}6} - 2\pi i Nx} dx, \\
 \text{I}''  &:=  \int_{\frac1{2\sqrt{6N}} \leq |x| \leq \frac12} F_{\ell, r} \left( e^{-\frac{\pi}{\sqrt{6N}} + 2\pi i x} \right) e^{\pi\sqrt{\frac{N}6} - 2\pi i Nx} dx.
\end{align*}
The first integral provides the main asymptotic contribution, while the second is of exponentially lower order and will be absorbed into the error term.

\subsection{Main arc}
\label{S:Circle:Main}
We now show that the main asymptotic terms stated in Theorem \ref{T:symmetrizedtheorem} arise from $\text{I}'$, and begin by rewriting the integral as
\begin{equation}
\label{E:I'}
\text{I}' = \frac{1}{2 \sqrt{6N}} \int_{-1}^1 F_{\ell,r}\left(e^{\frac{\pi}{\sqrt{6N}} \left(-1 + iu\right)}\right)
e^{\pi \sqrt{\frac{N}{6}} \left(1 - iu\right)} \: du.
\end{equation}
In other words, we write $\tau = \frac{1}{2 \sqrt{6N}} (u + i)$ with $|u| \leq 1.$
Our immediate goal is to rewrite $I'$ equivalently in terms of Bessel functions, up to an allowable error.

\begin{proposition}
\label{P:I'}
Assume that $r\geq 3$.
As $N \to \infty$,
\begin{equation*}
\mathrm{I}' - c_r N^{\frac{r}{2}-\frac34}\text{I}_{r-\frac32}\left(\pi\sqrt{\frac{2N}{3}}\right)
- d_{\ell, r} N^{\frac{r}{2}-\frac54}\text{I}_{r-\frac52}\left(\pi\sqrt{\frac{2N}{3}}\right)
\ll N^{\frac{r}{2}-\frac74}e^{\pi\sqrt{\frac{2N}{3}}}.
\end{equation*}
\end{proposition}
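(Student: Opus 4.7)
The plan is to substitute the two-term expansion of $F_{\ell,r}(q)$ provided by Corollary \ref{C:Flrdominant} into \eqref{E:I'}, and then to identify the resulting contour integrals as modified Bessel functions via Hankel's integral representation. Since $|q^{-N}| = e^{\pi\sqrt{N/6}}$ on the arc and the interval of integration has length $1/\sqrt{6N}$, the error term $O(N^{r/2-5/4}e^{\pi\sqrt{N/6}})$ of Corollary \ref{C:Flrdominant} contributes at most $O(N^{r/2-7/4}e^{\pi\sqrt{2N/3}})$ to $\mathrm{I}'$, which is already within the allowed error.

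For each of the two main terms I would make the change of variables $s = -2\pi i \tau$, with $\tau = x + iy$ running along the horizontal segment $|x| \le y$. Using the identities $-2\pi i N \tau = Ns$, $\pi i/(12\tau) = \pi^2/(6s)$, and $dx = (i/(2\pi))\, ds$, the integrand transforms into a contour integral of the form
\begin{equation*}
(\mathrm{const}) \int_L s^{\alpha}\, e^{Ns + \pi^2/(6s)}\, ds, \qquad \alpha \in \{\tfrac12 - r,\ \tfrac32 - r\},
\end{equation*}
where $L$ is the vertical segment with $\mathrm{Re}(s) = \pi/\sqrt{6N}$ and $|\mathrm{Im}(s)| \le \pi/\sqrt{6N}$. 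The exponent $Ns + \pi^2/(6s)$ has a real saddle at $s = \pi/\sqrt{6N}$ (the midpoint of $L$) with saddle value $\pi\sqrt{2N/3}$; along the vertical line its real part decreases monotonically to $\pi\sqrt{N/6}$ as $|\mathrm{Im}(s)| \to \infty$. Consequently, extending $L$ to the full vertical line adds only a tail contribution of $O(e^{\pi\sqrt{N/6}})$ times a polynomial in $N$, which is exponentially smaller than the main term and absorbed into the stated error.

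With the full vertical line as contour, the rescaling $s \mapsto s/N$ moves the contour to $\mathrm{Re}(s) = \pi\sqrt{N/6}$ and replaces the exponent by $s + \pi^2 N/(6s)$, so that Hankel's formula
\begin{equation*}
\frac{1}{2\pi i}\int_{\mathcal{H}} s^{-\nu-1}\, e^{s + w/s}\, ds = w^{-\nu/2}\, I_\nu(2\sqrt{w})
\end{equation*}
applies with $w = \pi^2 N/6$ (so $2\sqrt{w} = \pi\sqrt{2N/3}$) and $\nu = r - \tfrac32$ or $r - \tfrac52$ for the two terms. A routine bookkeeping of constants, using $c_r^{\ast} = \widetilde{c}_r/\sqrt{2\pi}$ and the analogous formula for $d_{\ell,r}^{\ast}$, then reproduces exactly the constants $c_r$ and $d_{\ell,r}$ of Theorem \ref{T:symmetrizedtheorem}. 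The one step requiring real care is the tail estimate when extending $L$ to the entire vertical line, but the drop in the real part of the exponent from the saddle value $\pi\sqrt{2N/3}$ to the limiting value $\pi\sqrt{N/6}$ provides enough exponential gain to absorb any polynomial blow-up coming from $|s|^\alpha$ once $r \geq 3$.
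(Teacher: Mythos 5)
Your proposal is correct and follows the same overall strategy as the paper: insert the two-term expansion of Corollary \ref{C:Flrdominant} into \eqref{E:I'}, check that its error term integrates to the stated $O\big(N^{\frac r2-\frac74}e^{\pi\sqrt{2N/3}}\big)$, and identify the two main terms as modified Bessel functions by completing a finite contour to an infinite one on which a classical integral representation applies. Where you differ is in that last step. The paper substitutes $u=i(v-1)$ to reach Wright's auxiliary function $P_s$ over the segment from $1-i$ to $1+i$, and Lemma \ref{L:PI} completes this segment to a Hankel loop, with the added pieces bounded by $e^{\frac{3\pi}{2}\sqrt{N/6}}$ (partly by citing Wright's estimates). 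You instead work in $s=-2\pi i\tau$, extend the vertical segment to the full Bromwich line $\mathrm{Re}(s)=\pi/\sqrt{6N}$, and use the Laplace-inversion form $\frac{1}{2\pi i}\int_{c-i\infty}^{c+i\infty}s^{-\nu-1}e^{s+w/s}\,ds=w^{-\nu/2}I_\nu\big(2\sqrt{w}\big)$; this is legitimate here because $\nu=r-\frac32$ or $r-\frac52$ is positive for $r\ge3$, so the integrand decays like $|s|^{-\nu-1}$ on the line and the deformation from the loop to the line costs nothing. Your route avoids importing Wright's segment bounds and is self-contained given this standard identity, while the paper's $P_s$ formulation isolates the contour work in a lemma valid for all real $s$. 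One inaccuracy to repair: the tail discarded when extending the segment to the full line is not $O\big(\mathrm{poly}(N)\,e^{\pi\sqrt{N/6}}\big)$; on the line $\mathrm{Re}(Ns)=\pi\sqrt{N/6}$ is constant, and at the junction points $|\mathrm{Im}(s)|=\pi/\sqrt{6N}$ one has $\mathrm{Re}\left(\frac{\pi^2}{6s}\right)=\frac{\pi}{2}\sqrt{N/6}$, so the correct uniform bound on the tails is $O\big(\mathrm{poly}(N)\,e^{\frac{3\pi}{2}\sqrt{N/6}}\big)$ --- the same exponent as in Lemma \ref{L:PI} --- which is still exponentially smaller than the main term $e^{\pi\sqrt{2N/3}}=e^{2\pi\sqrt{N/6}}$, so your conclusion is unaffected. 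The constant bookkeeping does close up as you claim: for instance $c_r^\ast N^{r-\frac32}\left(\frac{\pi^2N}{6}\right)^{-\frac12\left(r-\frac32\right)}=c_r N^{\frac r2-\frac34}$, matching Theorem \ref{T:symmetrizedtheorem}.
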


Before proving this result, we first relate the integration terms that directly appear in the evaluation of $\text{I}'$ to modified Bessel functions.  For $s \in \R$, we adopt Wright's definition \cite{Wri41} of an auxiliary function
\begin{equation*}
P_s:=\frac{1}{2\pi i}\int_{1-i}^{1+i} v^s e^{\pi\sqrt{\frac{N}{6}}\left(v+\frac1v\right)}dv.
\end{equation*}
The next lemma relates $P_s$ to an appropriate modified Bessel function.
\begin{lemma}
\label{L:PI}
As $N \to \infty$,
\begin{equation*}
P_s- I_{-s-1}\left(\pi\sqrt{\frac{2N}{3}}\right) \ll e^{\frac{3\pi}{2}\sqrt{\frac{N}{6}}}.
\end{equation*}
\end{lemma}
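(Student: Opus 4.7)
The plan is to interpret $P_s$ as a truncation of the classical Hankel--Schl\"afli contour integral representation of the modified Bessel function, and to estimate the portion of the contour that is discarded. The Schl\"afli representation states that for any $\nu\in\mathbb{C}$ and $z>0$,
\begin{equation*}
I_\nu(z)=\frac{1}{2\pi i}\int_\gamma v^{-\nu-1}e^{\frac{z}{2}(v+1/v)}\,dv,
\end{equation*}
where $\gamma$ is a Hankel contour that comes in from $-\infty$ below the negative real axis, loops once counterclockwise around the origin, and returns to $-\infty$ above the cut (the branch of $v^{-\nu-1}$ being that of the slit plane $\mathbb{C}\setminus(-\infty,0]$). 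Taking $\nu=-s-1$ and $z=\pi\sqrt{2N/3}$, so that $z/2=\pi\sqrt{N/6}$, the integrand matches exactly the one defining $P_s$.

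Next, I would deform $\gamma$ into the concatenation of a horizontal ray $R^{-}$ running from $-\infty-i$ to $1-i$, the vertical segment from $1-i$ to $1+i$ (whose contribution is exactly $P_s$), and a horizontal ray $R^{+}$ running from $1+i$ back to $-\infty+i$. Since $v^s e^{\pi\sqrt{N/6}(v+1/v)}$ is holomorphic on $\mathbb{C}\setminus(-\infty,0]$, no residues are picked up and we obtain
\begin{equation*}
I_{-s-1}\!\left(\pi\sqrt{\tfrac{2N}{3}}\right)-P_s=\frac{1}{2\pi i}\left(\int_{R^{-}}+\int_{R^{+}}\right) v^s e^{\pi\sqrt{N/6}(v+1/v)}\,dv,
\end{equation*}
reducing the claim to bounding each of these two horizontal integrals by $O\!\bigl(e^{\frac{3\pi}{2}\sqrt{N/6}}\bigr)$.

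The decisive computation is the bound on the exponential factor along $R^{+}$. Parametrising $v=u+i$ with $u\leq 1$, one finds $\Re(v+1/v)=u+u/(u^2+1)$, whose derivative $1+(1-u^2)/(u^2+1)^2$ is strictly positive for all real $u$; this function is therefore strictly increasing in $u$ and attains its maximum $3/2$ at the right endpoint $u=1$. Hence $|e^{\pi\sqrt{N/6}(v+1/v)}|\leq e^{\frac{3\pi}{2}\sqrt{N/6}}$ uniformly along $R^{+}$. On the bounded range $u\in[-1,1]$ this, combined with $|v^s|\leq(u^2+1)^{|s|/2}$, already gives the required estimate; on $u\leq-1$ one uses the sharper pointwise bound $\Re(v+1/v)\leq u$ to reduce the tail to $\int_{-\infty}^{-1}(u^2+1)^{|s|/2}e^{\pi\sqrt{N/6}\,u}\,du$, which is exponentially smaller. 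The ray $R^{-}$ is handled identically by symmetry.

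The main (and essentially only) obstacle is the careful bookkeeping of the branch of $v^s$ on the two horizontal tails together with the verification that this algebraic factor does not spoil the exponential decay there; once the exponential factor is isolated as above, both are routine. The argument is a direct adaptation of Wright's original treatment in \cite{Wri41}, which we expect to transfer essentially verbatim.
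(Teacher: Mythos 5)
Your proposal is correct and follows essentially the same route as the paper: both identify $P_s$ as the segment from $1-i$ to $1+i$ of a Hankel loop representation of $I_{-s-1}\left(\pi\sqrt{\frac{2N}{3}}\right)$ and bound the remaining portions of the contour by $e^{\frac{3\pi}{2}\sqrt{\frac{N}{6}}}$. The only difference is cosmetic: you keep the horizontal tails at height $\pm 1$ and verify the key estimate $\mathrm{Re}\left(v+\frac{1}{v}\right)\leq \frac{3}{2}$ directly via monotonicity, whereas the paper uses Wright's piecewise-linear contour (with tails at height $\pm\frac{1}{2}$) and cites his bounds for two of the segments.
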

\begin{proof}
The loop integral representation for the modified Bessel functions \cite{Arf85} states that
\begin{equation*}
I_{-s-1}\left(\pi\sqrt{\frac{2N}{3}}\right)=\frac{1}{2\pi i}\int_\Gamma e^{\pi\sqrt{\frac{N}{6}}\left(t+\frac1t\right)}t^s dt,
\end{equation*}
where $\Gamma$ is Hankel's standard contour that begins in the lower half-plane at $-\infty$, proceeds counterclockwise around the origin, and returns to $-\infty$ in the upper-half plane.
In our setting, we set $\Gamma$ to be the piecewise linear path consisting of the segments
\[
\gamma_4: \left(-\infty - \frac{i}{2}, -1 - \frac{i}{2}\right),\quad \gamma_3: \left(-1 - \frac{i}{2}, -1 - i\right), \quad \gamma_2: \left(- 1 - i, 1 - i \right), \text{ and }\gamma_1: \left(1 - i, 1+i\right),
\]
which are then followed by the corresponding conjugate mirror images $\gamma'_2, \gamma'_3,$ and $\gamma'_4$.  Since $P_s = \int_{\gamma_1},$ it is sufficient to show that all other segments are bounded as claimed.

This contour is very similar to that used in Lemma 17 of Wright \cite{Wri41}, though he did not use an infinite contour, since he only considered integral $s$ (and therefore had no branch cuts).  He directly showed (equations (5.53) and (5.55) of \cite{Wri41}) that
\begin{equation*}
\int_{\gamma_2} \ll e^{\frac{3\pi}{2}\sqrt{\frac{N}{6}}} \qquad \text{and } \quad
\int_{\gamma_3} \ll e^{-\pi \sqrt{\frac{N}{6}}},
\end{equation*}
so we need only bound the integral along $\gamma_4$.  On this segment,
$\text{Re}\left(\frac{1}{t}\right) < 0$, so
\begin{align*}
\int_{\gamma_4} \ll \int_1^\infty e^{-\pi\sqrt{\frac{N}{6}}t} \left\lvert-t+\frac{i}{2}\right\rvert^s dt
\ll N^{-\frac{1}{2}} e^{-\pi\sqrt{\frac{N}{6}}},
\end{align*}
where the final asymptotic inequality follows from a simple bound for incomplete Gamma function.
\end{proof}

\begin{proof}[Proof of Proposition \ref{P:I'}]

By Corollary \ref{C:Flrdominant} and \eqref{E:I'} we have
\begin{align*}
\text{I}' = \frac{1}{2\sqrt{6N}}
\int_{-1}^1 & \left[ c_r^\ast \left( \frac{\pi(1-iu)}{\sqrt{6N}}\right)^{\frac12-r}
e^{\pi\sqrt{\frac{N}{6}}\left(\frac{1}{iu-1}+(1-iu)\right)}\right. \\
& \qquad \left. - d_{\ell,r}^\ast \left( \frac{\pi(1-iu)}{\sqrt{6N}}\right)^{\frac{3}{2}-r}
e^{\pi\sqrt{\frac{N}{6}}\left(\frac{1}{iu-1}+(1-iu)\right)}
+ O\left( N^{\frac{r}{2}-\frac54} e^{2 \pi\sqrt{\frac{N}{6}}} \right)
 \right] du,
\end{align*}
and the error term gives the (absolute) error bound as claimed in the statement.  For the two asymptotic terms we calculate the general identity
\begin{align*}
\frac{1}{2 \sqrt{6N}} \int_{-1}^1 \left(\frac{\pi(1-iu)}{\sqrt{6N}}\right)^s
e^{\pi\sqrt{\frac{N}{6}}\left(\frac{1}{iu-1}+(1-iu)\right)} du
& = \frac{i}{2 \sqrt{6N}} \int_{1+i}^{1-i} \left(\frac{\pi v}{\sqrt{6N}}\right)^s
e^{\pi \sqrt{\frac{N}{6}} \left(\frac{1}{v} + v\right)} \: dv \\
& = \left( \frac{\pi}{\sqrt{6N}} \right)^{s+1} P_s,
\end{align*}
where we have made the change of variables $u = i(v-1).$
Along with Lemma \ref{L:PI}, this completes the proof.
\end{proof}

\subsection{Error arc}
\label{S:Circle:Minor}
We now turn to the integral $\text{I}''$ and show that it is exponentially smaller than the main asymptotic terms in Theorem \ref{T:symmetrizedtheorem}.
\begin{proposition}
As $N \to \infty,$
\begin{equation*}
\text{I}''\ll N^{\frac{r}{2}+\frac14} e^{\frac{3\pi}{2}\sqrt{\frac{N}{6}}}.
\end{equation*}
\end{proposition}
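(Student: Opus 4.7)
The plan is to estimate $\text{I}''$ by applying the triangle inequality inside the integral and then invoking the uniform bound for $F_{\ell,r}$ on the error arc that has already been established in Corollary \ref{C:Flraway}. Since the integrand decouples into three easily-controlled factors, the proof amounts to combining these bounds with the trivial length estimate for the interval of integration; no further analytic input is needed.

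More precisely, I would first note that on the error arc $\frac{1}{2\sqrt{6N}} \leq |x| \leq \frac{1}{2}$ we have $y = \frac{1}{2\sqrt{6N}}$, so the hypothesis of Corollary \ref{C:Flraway} is met, giving
\begin{equation*}
\left\lvert F_{\ell,r}\!\left(e^{-\frac{\pi}{\sqrt{6N}} + 2\pi i x}\right)\right\rvert \ll N^{\frac{r}{2}+\frac14}\, e^{\frac{\pi}{2}\sqrt{\frac{N}{6}}}.
\end{equation*}
Next I would observe that the oscillatory factor satisfies $\left\lvert e^{\pi\sqrt{N/6} - 2\pi i N x}\right\rvert = e^{\pi\sqrt{N/6}}$ uniformly in $x$. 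Inserting both bounds into the defining integral for $\text{I}''$ and bounding the length of the interval of integration by $1$, I obtain
\begin{equation*}
\lvert\text{I}''\rvert \ll N^{\frac{r}{2}+\frac14}\, e^{\frac{\pi}{2}\sqrt{\frac{N}{6}}} \cdot e^{\pi\sqrt{\frac{N}{6}}} = N^{\frac{r}{2}+\frac14}\, e^{\frac{3\pi}{2}\sqrt{\frac{N}{6}}},
\end{equation*}
which is the claimed bound.

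There is no real obstacle here: the heavy lifting was done in establishing Corollary \ref{C:Flraway}, and the proposition is a straightforward assembly step. The only point worth emphasizing is comparing this bound to the main-term contribution from Proposition \ref{P:I'}, which is of order $N^{\frac{r}{2}-\frac34} e^{\pi\sqrt{2N/3}} = N^{\frac{r}{2}-\frac34}\, e^{2\pi\sqrt{N/6}}$; since $\frac{3\pi}{2}\sqrt{N/6} < 2\pi\sqrt{N/6}$, the error arc contribution is indeed exponentially smaller than the main term and can be safely absorbed into the error term in Theorem \ref{T:symmetrizedtheorem}.
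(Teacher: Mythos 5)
Your proposal is correct and matches the paper's proof: both simply apply the triangle inequality inside the integral, use the uniform bound from Corollary \ref{C:Flraway} together with $\left\lvert e^{\pi\sqrt{N/6}-2\pi iNx}\right\rvert = e^{\pi\sqrt{N/6}}$, and bound the integration interval by length $1$. Your closing comparison with the main-arc contribution is a helpful (if redundant) sanity check, but otherwise there is nothing to add.
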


\begin{proof}
Using Corollary \ref{C:Flraway}, we have the simple absolute bound
\begin{equation*}
|\text{I}''|\leq \int_{\frac{1}{2\sqrt{6N}}\leq |x|\leq \frac12}\left\lvert F_{\ell, r}\Big(e^{-\frac{\pi}{\sqrt{6N}}+2\pi ix}\Big) e^{\pi \sqrt{\frac{N}{6}}-2\pi iNx}\right\rvert dx
\leq e^{\frac{\pi\sqrt{N}}{6}}N^{\frac{r}{2}+\frac14} e^{\frac{\pi}{2}\sqrt{\frac{N}{6}}}.
\end{equation*}
\end{proof}

We have thus proven the following asymptotic result that gives Theorem \ref{T:symmetrizedtheorem} when $\ell = 1$ and $3$.
\begin{corollary}
Suppose that $r \geq 3.$  As $N \to \infty$,
\begin{align*}
a_{\ell,r}(N) - c_r N^{\frac{r}{2}-\frac34}\text{I}_{r-\frac32}\left(2\pi\sqrt{\frac{N}{6}}\right)
- d_{\ell, r} N^{\frac{r}{2}-\frac54}\text{I}_{r-\frac52}\left(2\pi\sqrt{\frac{N}{6}}\right)
\ll N^{\frac{r}{2}-\frac74}e^{2\pi\sqrt{\frac{N}{6}}}.
\end{align*}
\end{corollary}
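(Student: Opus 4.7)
The plan is essentially a bookkeeping exercise: the Cauchy integral decomposition $a_{\ell,r}(N) = \mathrm{I}' + \mathrm{I}''$ established at the top of Section \ref{S:Circle} reduces the corollary to combining Proposition \ref{P:I'} with the preceding bound on $\mathrm{I}''$. All of the substantive analytic work has already been done in Sections \ref{S:Asymp} and \ref{S:Circle:Main}, so no new estimates are required and no serious obstacle remains.

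First I would identify $\pi\sqrt{\frac{2N}{3}} = 2\pi\sqrt{\frac{N}{6}}$, so that the two Bessel-function main terms produced by Proposition \ref{P:I'} for $\mathrm{I}'$ are literally the ones displayed in the corollary, and the error term $N^{\frac{r}{2}-\frac{7}{4}} e^{\pi\sqrt{2N/3}}$ agrees with $N^{\frac{r}{2}-\frac{7}{4}} e^{2\pi\sqrt{N/6}}$. Next I would invoke the bound $|\mathrm{I}''| \ll N^{\frac{r}{2}+\frac{1}{4}} e^{\frac{3\pi}{2}\sqrt{N/6}}$ supplied by the immediately preceding proposition on the error arc. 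Since $\frac{3\pi}{2}\sqrt{N/6} < 2\pi\sqrt{N/6}$, with fixed gap $\frac{\pi}{2}\sqrt{N/6}$, the factor $e^{-\frac{\pi}{2}\sqrt{N/6}}$ dominates any polynomial power of $N$, and so the entire contribution of $\mathrm{I}''$ is absorbed into the stated error term.

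Thus the only care needed is to match the notation between $\pi\sqrt{2N/3}$ and $2\pi\sqrt{N/6}$ and to verify that the $\ell$-dependence has been consistently tracked through $d_{\ell,r}$ (which enters only from the sub-leading constant $\widetilde{d}_{\ell,r}$ in Proposition \ref{P:Slr}, via Corollary \ref{C:Flrdominant} and then Proposition \ref{P:I'}, with no mixing into the leading constant $c_r$). Adding the contributions from $\mathrm{I}'$ and $\mathrm{I}''$ then yields the stated asymptotic, completing the proof of the corollary and, in view of the remark following the statement of the symmetrized moment generating functions in Section \ref{S:Symm}, also of Theorem \ref{T:symmetrizedtheorem} in the cases $\ell = 1, 3$.
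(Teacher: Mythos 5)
Your proposal is correct and matches the paper's own (essentially immediate) argument: the corollary is obtained exactly by combining Proposition \ref{P:I'} for the main arc with the error-arc bound $\mathrm{I}'' \ll N^{\frac{r}{2}+\frac14} e^{\frac{3\pi}{2}\sqrt{\frac{N}{6}}}$, noting $\pi\sqrt{\tfrac{2N}{3}} = 2\pi\sqrt{\tfrac{N}{6}}$ and that the exponential gap $e^{-\frac{\pi}{2}\sqrt{\frac{N}{6}}}$ absorbs the polynomial factor into the stated error. No further comment is needed.
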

This completes the proof of Theorem \ref{T:symmetrizedtheorem} for $r \geq 3.$  Furthermore, the case $r = 2$ of Theorem \ref{T:symmetrizedtheorem} is found in the proof of Theorem 2.1 of \cite{BM09}.

\section{Properties of the ospt-function}
\label{S:ospt}

In this section we prove Theorems \ref{T:osptAsym} and \ref{T:ospt2}.  We begin by proving a combinatorial inequality for the ospt-function, and then proceed by describing the relationship between the ospt-function and previous analytic and arithmetic results.

\subsection{Combinatorial results}
\label{S:ospt:Comb}

We first define several partition statistics.  Suppose that $\lambda$ is a partition.  A {\it run of length $L$} is a sequence of consecutive integers such that each of $m, m+1, \dots, m+L-1$ occur as parts in $\lambda$, and $m +L$ does not (however, $m-1$ may occur in $\lambda$.).  Following Andrews, Chan, and Kim \cite{ACK12}, we further define an {\it odd string} to be a run of length $L$ that begins at some odd positive integer $2k +1$, such that $2k+1$ occurs only once in $\lambda$ and $L \geq 2k+1$.  Similarly, an {\it even string} is a run of length $L$ that begins at some positive even integer $2k$, with the additional conditions that $2k -1$ does not occur as a part, and that $L \geq 2k-1$ is odd.
The total number of even or odd strings contained in $\lambda$ is then denoted by $\ST(\lambda)$ (note that a single partition may contain multiple even and/or odd strings).

In a remark following Corollary 8 of \cite{ACK12}, Andrews, Chan, and Kim showed that $\ospt(N)$ is always positive.  In fact, we now prove that the ospt-function is weakly increasing.
\begin{lemma}
\label{L:osptInc}
For $N \geq 1$,
\begin{equation*}
\ospt(N + 1) \geq \ospt(N).
\end{equation*}
\end{lemma}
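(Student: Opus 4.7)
The plan is to establish the inequality by constructing an injection from (partition, string) pairs of $N$ into (partition, string) pairs of $N+1$. Since \eqref{E:osptST} gives $\ospt(N) = \sum_{\lambda \vdash N} \ST(\lambda)$, exhibiting such an injection immediately yields $\ospt(N+1) \geq \ospt(N)$. For the bulk of the pairs the natural operation of adjoining a new part of size 1 works cleanly: if $s$ is either an odd string based at $2k+1$ with $k \geq 1$ or an even string based at $2k$ with $k \geq 2$, then every part of $s$ lies at position $\geq 3$, so the run structure of $\lambda$ at positions $\geq 3$ is unaffected by appending a 1, and $s$ remains a valid string in $\mu := \lambda \cup \{1\}$. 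For these ``high'' strings I would send $(\lambda, s) \mapsto (\mu, s)$.

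The remaining cases, in which $s$ is an odd string $\{1, \dots, L\}$ based at 1 or an even string $\{2, \dots, L+1\}$ based at 2, require separate treatment because adjoining a 1 would either duplicate the unique 1 (violating the odd-string multiplicity condition) or introduce a 1 where there was none (violating the even-string exclusion condition). In these ``low'' cases I would instead modify $\lambda$ by promoting a single part inside or adjacent to $s$ by 1. For example, for an odd string at 1, one can replace a copy of the largest part $L$ of $s$ by $L+1$; the resulting $\mu \vdash N+1$ contains either a run at 1 of length $L+1$ (when $L$ had multiplicity at least 2 in $\lambda$) or a run at 1 of length $L-1$ together with an isolated singleton $\{L+1\}$ (when $L$ had multiplicity exactly 1). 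In either subcase one can read off a valid odd string $s'$ at 1 in $\mu$, and a symmetric construction, promoting the analogous endpoint of an even string at 2, covers the other low case.

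The main obstacle is assembling these sub-maps into a single well-defined injection. One must verify that each proposed $s'$ satisfies the relevant length and parity conditions --- namely $L' \geq 2k+1$ for odd strings based at $2k+1$, and $L'$ odd with $L' \geq 2k-1$ for even strings at $2k$ --- which is especially delicate in the minimal-length regime where the truncated run risks falling below the threshold after a single part is removed. One must also check that the images of the various sub-maps are pairwise disjoint in the target and that the original pair $(\lambda, s)$ can be recovered from $(\mu, s')$, possibly by recording auxiliary data such as the multiplicity of a distinguished part. The smallest cases, such as $L = 1$ in the odd-string-at-1 branch (where $\lambda$ contains exactly one 1 and possibly no 2), are the most intricate and will likely require one or two additional sub-maps to handle every configuration.
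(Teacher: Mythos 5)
Your overall strategy---reduce to \eqref{E:osptST} and build an injection that adds $1$ to the partition for ``high'' strings and makes a local modification for strings based at $1$ or $2$---is in the same spirit as the paper, but your proposal has genuine gaps in exactly the places you defer. First, the low-case constructions do not in general produce a valid string, so the claim ``in either subcase one can read off a valid odd string $s'$ at $1$ in $\mu$'' fails. If the odd string at $1$ has $L=1$ and the unique $1$ is the part you bump (e.g.\ $\lambda$ containing exactly one $1$ and no $2$), then $\mu$ contains no $1$ at all, and the run at $2$ in $\mu$ need not have odd length (take $\lambda = 1+3$, $\mu = 2+3$: the run at $2$ has length $2$), so neither an odd string at $1$ nor an even string at $2$ is available. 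The ``symmetric'' even-at-$2$ branch is worse: an even string at $2$ has run $2,\dots,L+1$ with $L$ odd, and bumping one copy of the top part $L+1$ changes the run length to $L-1$ or (at least) $L+1$, both even, so the parity condition $L'$ odd is violated; when the multiplicity of $L+1$ is one and $L+3\notin\lambda$ the string is simply destroyed with nothing to replace it. Second, because you work with (partition, string) pairs, you still owe a proof that the several sub-maps are injective with pairwise disjoint images; ``recording auxiliary data'' does not by itself give an injection into pairs counted by $\ospt(N+1)$.

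The paper sidesteps all of this by defining an injection on partitions rather than on pairs, keyed to the number of $1$s in $\lambda$: if $\lambda$ has no $1$s or at least two $1$s, adjoin a $1$; if $\lambda$ has exactly one $1$, replace it by a $2$. One then checks only that $\ST(\lambda)\le\ST(\lambda')$ in each case (adding a $1$ creates an odd string at $1$ that at worst replaces a destroyed even string at $2$; trading the unique $1$ for a $2$ trades the odd string at $1$ for an even string at $2$; strings based at parts $\ge 3$ are untouched), and injectivity is immediate because the images have exactly one $1$, no $1$s, and at least three $1$s respectively. If you want to salvage your pair-level argument, the natural repair of your low branches is precisely these moves (add a $1$, or swap the unique $1$ for a $2$) rather than bumping the top of the string, since bumping interacts badly with the parity and multiplicity conditions in the definitions of even and odd strings.
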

\begin{proof}
Suppose that $\lambda$ is a partition of size $N$ that contains at least one even or odd string.  We describe a simple injection that sends $\lambda$ to a partition $\lambda'$ of $N+1$ such that $\ST(\lambda) \leq \ST(\lambda')$; this is sufficient to prove the claim by \eqref{E:osptST}.

If $\lambda$ does not contain any $1$s, then we define $\lambda'$ by adding a $1$ to $\lambda$.  There is now an odd string in $\lambda'$ beginning with $1$.  If there was an even string in $\lambda$ that began with $2$, then this odd string in $\lambda'$ serves as its replacement in the total enumeration, and the odd string is otherwise an additional string; in either case we have $\ST(\lambda) \leq \ST(\lambda')$.

If $\lambda$ contains exactly one $1$, then it contains an odd string beginning with $1$.  We define $\lambda'$ by removing the $1$ and adding a $2$ in its place, which deletes the odd string.  However, this then creates an even string beginning with $2$, since even if there were already parts of size $2$ in $\lambda$, there were no even strings due to the presence of the part $1$.  Furthermore, this procedure does not affect any other strings in $\lambda$, so  again $\ST(\lambda) = \ST(\lambda')$.

Finally, if $\lambda$ contains two or more $1$s, we construct $\lambda'$ by adding an additional $1$.  Since $\lambda$ does not contain any odd or even strings beginning with $1$ or $2$, the construction of $\lambda'$ does not affect the total string count, and thus $\ST(\lambda) = \ST(\lambda').$
\end{proof}

\subsection{Asymptotic behavior}
\label{S:ospt:Asymptotic}

We recall the generating function for the ospt-function from Section 3 of \cite{ACK12}, namely
\begin{equation}
\label{E:osptGen}
\mathcal{O}(q) := \sum_{N\geq 0} \text{ospt} (N) q^N = \frac1{(q)_\infty} \sum_{n\geq 1} \frac{(-1)^{n+1} q^{\frac{n^2+n}2} \left(1-q^{n^2} \right)}{1-q^n}.
\end{equation}
For convenience, we define the following notation for the hypergeometric sum portion:
$$
T(q) := \sum_{n\geq 1} \frac{(-1)^{n+1} q^{\frac{n^2+n}2} \left(1-q^{n^2} \right)}{1-q^n}.
$$

The principle estimate needed to conclude Theorem \ref{T:osptAsym} is the following asymptotic result for $T(q).$
\begin{proposition}
\label{P:Tq}
We have
$$
\lim_{y\rightarrow 0^+} T\left(e^{-y} \right) = \frac14.
$$
\end{proposition}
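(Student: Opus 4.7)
The plan is to decompose $T(q)$ as a difference of two false Appell-type sums whose leading singular behavior as $q \to 1^-$ is identical and therefore cancels, leaving the desired constant $\tfrac14$ as the next term in the asymptotic expansion. Starting from the elementary identity $\tfrac{1-q^{n^2}}{1-q^n} = \tfrac{1}{1-q^n} - \tfrac{q^{n^2}}{1-q^n}$ applied to the hypergeometric sum defining $T(q)$ (displayed after \eqref{E:osptGen}), I would immediately obtain
\begin{equation*}
T(q) = S_{1,1}(q) - S_{3,1}(q),
\end{equation*}
where $S_{\ell,1}$ is the false Appell-type sum from \eqref{E:Slr} with $r = 1$ (so $\rho = 0$), since the exponents $n(n+1)/2 = (1 \cdot n^2 + n)/2$ and $n(n+1)/2 + n^2 = (3 n^2 + n)/2$ match the required pattern.

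I would then adapt the Mittag-Leffler / Taylor-expansion analysis of Section~\ref{S:Asymp:dominant} to the exceptional case $r = 1$, which is not directly covered by Proposition~\ref{P:Slr}. Setting $q = e^{-y}$ with $\tau = iy/(2\pi)$, the Mittag-Leffler decomposition \eqref{ML} of $q^{n/2}/(1-q^n)$ shows that the leading contribution to $S_{\ell,1}(q)$ as $y \to 0^+$ comes from
\begin{equation*}
\frac{g_{\ell,1}(\tau)}{-2\pi i\tau}, \qquad \text{where} \qquad g_{\ell,1}(\tau) := \sum_{n \geq 1}\frac{(-1)^{n+1}}{n}\,q^{\ell n^2/2},
\end{equation*}
with the remaining Mittag-Leffler tail (analogous to the last sum in \eqref{E:SMittag}) contributing $o(1)$ as $\tau \to 0$ by arguments parallel to those of Section~\ref{S:Asymp:dominant}. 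Since $g_{\ell,1}$ is continuous at $\tau = 0$ with $g_{\ell,1}(0) = \log 2$ (independent of $\ell$), first-order Taylor expansion along the imaginary axis gives
\begin{equation*}
g_{\ell,1}(\tau) = \log 2 + \frac{\pi i\ell}{4}\,\tau + O(\tau^2).
\end{equation*}
The linear coefficient arises from the regularized derivative $g_{\ell,1}'(0^+) = \pi i\ell \cdot \eta(-1) = \pi i\ell/4$, where I use the Mellin identity $\sum_{n \geq 1}(-1)^{n+1} n\, e^{-xn^2} \to \eta(-1) = (1-4)\zeta(-1) = \tfrac14$ as $x \to 0^+$, a direct consequence of Zagier's asymptotic framework (Proposition~\ref{P:Zag}).

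Dividing by $-2\pi i\tau = y$ yields
\begin{equation*}
S_{\ell,1}(e^{-y}) = \frac{\log 2}{y} - \frac{\ell}{8} + o(1) \qquad \text{as } y \to 0^+.
\end{equation*}
The singular term $\log 2/y$ is independent of $\ell$, so it cancels exactly in the difference $T = S_{1,1} - S_{3,1}$, giving
\begin{equation*}
\lim_{y \to 0^+} T(e^{-y}) = -\frac{1}{8} - \left(-\frac{3}{8}\right) = \frac{1}{4},
\end{equation*}
as claimed. The main technical obstacle is the rigorous control of the Mittag-Leffler tail and of the regularized sums in the exceptional $r = 1$ case, where naive absolute bounds are insufficient; this parallels the small-$j$ issue addressed in Proposition~\ref{P:glj} of the Appendix, and both require the more delicate Mellin-transform techniques discussed there.
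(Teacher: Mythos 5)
Your decomposition $T(q)=S_{1,1}(q)-S_{3,1}(q)$ is correct (the exponents do match \eqref{E:Slr} with $r=1$, $\rho=0$), and your main-term computation is a valid reorganization of the paper's argument. The paper never separates the two false Appell sums: it keeps the factor $1-q^{n^2}$ intact, applies the Mittag--Leffler expansion \eqref{E:Mittag2} to $T$ itself, and obtains the dominant contribution as $y^{-1}g(y)$ with $g(y)=\tfrac{y}{4}+O\left(y^2\right)$ from a single application of Proposition~\ref{P:Zag} (this is Proposition~\ref{P:gy}), so no divergent pieces ever appear. In your version the constant instead comes from the expansions $g_{\ell,1}\left(e^{-y}\right)=\log 2-\tfrac{\ell y}{8}+O\left(y^2\right)$, i.e.\ from the regularized values $\left(1-2^{1-s}\right)\zeta(s)$ at $s=1$ and $s=-1$, and $-\tfrac18+\tfrac38=\tfrac14$ agrees with the paper since $g=g_{1,1}-g_{3,1}$ in your notation. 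So the two routes are essentially the same calculation with different bookkeeping: yours makes the cancellation of the $\tfrac{\log 2}{y}$ singularities explicit, at the price of having to control each piece separately to $o(1)$ accuracy; one caveat is that $g_{\ell,1}$ is not differentiable at $\tau=0$, so the ``Taylor expansion'' must be phrased as an asymptotic expansion (via Proposition~\ref{P:Zag} applied to the odd and even subsums, or a Mellin-transform argument), which you essentially acknowledge.

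The genuine soft spot is your claim that the Mittag--Leffler tails contribute $o(1)$ ``by arguments parallel to those of Section~\ref{S:Asymp:dominant}.'' The $j=1$ tail estimates there, namely \eqref{E:j=1Int} and Proposition~\ref{P:glj}, are absolute-value bounds and give only $O(1)$; an $O(1)$ error is fatal here because the quantity you are computing is itself a constant. Concretely, the tail of $S_{\ell,1}$ is, up to constants, $y\sum_{n\geq 1}(-1)^{n+1}n e^{-\ell n^2y/2}\sum_{m\geq 1}\left(n^2y^2+4\pi^2m^2\right)^{-1}$, and discarding the signs leaves $y\sum_{n\geq 1}n e^{-\ell n^2 y/2}\asymp 1$; even recombining the two tails (restoring the factor $1-e^{-n^2y}$) and bounding absolutely still yields a quantity of order $1$, as the substitution $n\mapsto t/\sqrt{y}$ shows. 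To get decay you must exploit the alternation in $n$ --- for instance pair consecutive terms, or apply Proposition~\ref{P:Zag} separately to the odd and even subsums of the tail --- which gains a factor of $\sqrt{y}$ and makes the tail $O\left(\sqrt{y}\right)$. (The paper's own wording at this step is terse as well: the displayed absolute bound following \eqref{E:Mittag2} does not literally tend to $0$, and the same sign-cancellation is what rescues it.) With that supplement, your proof closes; as written, the tail estimate is the missing step rather than a routine parallel to Proposition~\ref{P:glj}, which only delivers boundedness, not decay.
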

\begin{proof}
As before, we use the Mittag-Leffler decomposition \eqref{ML}
\begin{equation}
\label{E:Mittag2}
\frac{e^{-\frac{y}{2}}}{1 - e^{-y}} = \frac1{y} + i \sum_{m\geq 1} \left(\frac1{iy-2\pi m} + \frac1{iy+2\pi m}\right).
\end{equation}
The contribution of the first summand of \eqref{E:Mittag2} to $T\left(e^{-y}\right)$ is described by the asymptotic expansion of $y^{-1} g(y)$, where
\begin{equation}
\label{E:gy}
g(y) := \sum_{n\geq 1} \frac{(-1)^{n+1} e^{-\frac{n^2y}{2}} \left(1-e^{-n^2y} \right)}{n}.
\end{equation}
Proposition \ref{P:gy} implies that
\begin{equation*}
y^{-1} g(y) \sim \frac{1}{4},
\end{equation*}
so the remaining task is to show that the other terms from \eqref{E:Mittag2} do not make any contribution to the asymptotic main term of $T\left(e^{-y}\right)$.
By simplifying and regrouping, we find that the remaining sum can be bounded, up to asymptotic constants, by
\begin{equation*}
y \sum_{n\geq 1} e^{-n^2y} \left(1-e^{-2n^2 y} \right) n \sum_{m\geq 1} \frac1{n^2y^2 + m^2 },
\end{equation*}
which evaluates directly to $0$ when $y \to 0^+.$
\end{proof}

Ingham's Tauberian theorem relates the asymptotic behavior of such a series to its coefficients. The following result is a special case of Theorem 1 in \cite{Ing41} (see also Theorem 1').
\begin{theorem}[Ingham]\label{T:Ingham}
Let $f(z)=\sum_{N\geq 0}a(N) z^N$ be a power series with real nonnegative
coefficients and radius of convergence equal to $1$. If there exist $A>0$,
$\lambda, \alpha\in\R$ such that
\[
f(z)\sim
\lambda\left(-\log (z)\right)^\alpha
\exp\left(\frac{-A}{\log (z)}\right)
\]
as $z\to 1^-$, then
\[
\sum_{m=0}^N a(m)\sim\frac{\lambda}{2\sqrt{\pi}}\,
\frac{A^{\frac{\alpha}{2}-\frac14}}{N^{\frac{\alpha}{2}+\frac14}}\,
\exp\left(2\sqrt{AN}\right)
\]
as $N\to\infty$.
\end{theorem}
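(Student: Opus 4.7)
The plan is to convert the problem on power series into a Laplace-transform Tauberian problem and then invert an exponentially growing asymptotic by saddle-point analysis. First, substitute $z = e^{-t}$ so that the hypothesis becomes $f(e^{-t}) \sim \lambda t^\alpha e^{A/t}$ as $t \to 0^+$. Setting $A(N) := \sum_{m=0}^N a(m)$, Abel summation yields
\[
f(e^{-t}) = (1-e^{-t}) \sum_{N \geq 0} A(N) e^{-Nt},
\]
so that $g(t) := \sum_{N \geq 0} A(N) e^{-Nt} \sim \lambda t^{\alpha-1} e^{A/t}$ as $t \to 0^+$. This reduces the problem to proving a Tauberian theorem: a nondecreasing function $A$ whose Laplace transform grows like $\lambda t^{\alpha-1} e^{A/t}$ must itself satisfy the conclusion of the theorem.

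To motivate the constants, I would first run the calculation heuristically. Assume the ansatz $A(N) \sim c N^\beta e^{2\sqrt{AN}}$ and compute the Laplace transform
\[
\int_0^\infty x^\beta e^{-xt + 2\sqrt{Ax}} \, dx
\]
by Laplace's method. The exponent $\phi(x) = -xt + 2\sqrt{Ax}$ has saddle point at $x_0 = A/t^2$ with $\phi(x_0) = A/t$ and $\phi''(x_0) = -t^3/(2A)$. Standard Gaussian integration gives the approximate value $c \cdot 2\sqrt{\pi}\, A^{\beta + 1/2} t^{-2\beta - 3/2} e^{A/t}$. Matching with $\lambda t^{\alpha -1} e^{A/t}$ forces $\beta = -\alpha/2 - 1/4$ and $c = \frac{\lambda}{2\sqrt{\pi}} A^{\alpha/2 - 1/4}$, recovering exactly the stated formula.

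To make the saddle-point heuristic rigorous, I would use Karamata's method: for $t_N := \sqrt{A/N}$, approximate the indicator function $\mathbf{1}_{[0,1]}(x/N)$ from above and below by linear combinations of exponentials $e^{-s x t_N}$, and then use the assumption on $g$ to control the integrals $\int e^{-sxt_N} dA(x)$. Upon combining these bounds and appealing to monotonicity of $A$ (which is where the nonnegativity hypothesis on $a(N)$ is essential), one obtains matching upper and lower bounds for $A(N)$ that give the claimed asymptotic. The scaling $t_N = \sqrt{A/N}$ is chosen precisely so that the saddle point of the exponent $-Nt + A/t$ lands on $t_N$, making this the natural test scale.

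The main obstacle is the rigorous tail control: one must show that contributions to the Laplace integrals coming from $x$ far from $x_0 = N$ are genuinely exponentially subdominant, uniformly as $t \to 0^+$. This requires using the assumed asymptotic for $g(t)$ not just at the leading scale but also at the perturbed scales $t_N (1 \pm \epsilon)$, and then combining these estimates via a careful interval-comparison argument exploiting monotonicity of $A$. This delicate step is exactly what Ingham handles in the proof of Theorem~1 of \cite{Ing41}; the present statement is a direct special case of that theorem, obtained by specializing the slowly varying function there to the constants appearing in our hypothesis.
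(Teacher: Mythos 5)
The paper gives no proof of this theorem at all --- it is quoted verbatim as a special case of Theorem~1 of Ingham \cite{Ing41} --- and your argument ultimately rests on exactly the same citation, with your Abel-summation reduction ($f(e^{-t})=(1-e^{-t})\sum_{N\ge0}A(N)e^{-Nt}$, so $g(t)\sim\lambda t^{\alpha-1}e^{A/t}$) and your saddle-point computation ($x_0=A/t^2$, $\phi(x_0)=A/t$, $\phi''(x_0)=-t^3/(2A)$, forcing $\beta=-\tfrac{\alpha}{2}-\tfrac14$ and $c=\tfrac{\lambda}{2\sqrt{\pi}}A^{\frac{\alpha}{2}-\frac14}$) correctly verifying that the specialization of Ingham's result yields precisely the stated constants. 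The one caveat is that the classical Karamata indicator-approximation device does not extend routinely to Laplace transforms with exponential growth $e^{A/t}$ --- handling that regime is exactly the nontrivial content of Ingham's theorem, and your penultimate paragraph should be read as motivation rather than a workable rigorization (note also the sign slip: the exponent whose critical point gives the scale $t_N=\sqrt{A/N}$ is $Nt+A/t$, not $-Nt+A/t$) --- but since you explicitly defer that delicate step to \cite{Ing41}, just as the paper does, there is no gap in the overall logic.
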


We are now able to prove the asymptotic behavior of $\ospt(N)$.
\begin{proof}[Proof of Theorem \ref{T:osptAsym}]
We use Inham's Tauberian theorem in order to conclude the asymptotic behavior of $\ospt(N).$  In particular, Lemma \ref{L:osptInc} implies that Theorem \ref{T:Ingham} may be applied to
\begin{equation*}
(1-q) \mathcal{O}(q) =1+ \sum_{N \geq 1} \left(\ospt(N) - \ospt(N-1)\right) q^N.
\end{equation*}
Combined with Proposition \ref{P:Tq} and \eqref{pasymp}, this implies that
\begin{equation*}
1+\sum_{m = 1}^N \left(\ospt(N) - \ospt(N-1)\right) = \ospt(N) \sim \frac{1}{16\sqrt{3}N} e^{\pi\sqrt{\frac{2N}{3}}}\sim\frac{1}{4} p(N),
\end{equation*}
which completes the proof.
\end{proof}

\subsection{A simple congruence modulo $2$}
\label{S:ospt:mod2}

Theorem \ref{T:ospt2} is an immediate corollary of Andrews, Garvan and Liang's description of the parity of $\spt(N)$ (see Theorem 1.3 of \cite{AGL12}) and the following result.
\begin{lemma}
For all $N \in \N$, we have
\begin{equation*}
\ospt(N) \equiv \spt(N) \pmod{2}.
\end{equation*}
\end{lemma}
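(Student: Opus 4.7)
The plan is to reduce the congruence to a purely algebraic identity coming from the symmetrized positive second moment defined in Section \ref{S:Symm}, and observe that the resulting difference is manifestly twice an integer.

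First, I would specialize the definitions of $\mu_r^+(N)$ and $\eta_r^+(N)$ to the case $r = 2$. Since $\lfloor (r-1)/2 \rfloor = 0$, the relevant binomial coefficient is $\binom{m}{2} = \frac{m^2 - m}{2}$. Expanding this in the definitions from Section \ref{S:Symm} and separating the $m^2$ and $m$ contributions gives
\begin{align*}
\mu_2^+(N) &= \sum_{m \geq 1} \binom{m}{2} M(m,N) = \frac{M_2^+(N) - M_1^+(N)}{2}, \\
\eta_2^+(N) &= \sum_{m \geq 1} \binom{m}{2} N(m,N) = \frac{N_2^+(N) - N_1^+(N)}{2}.
\end{align*}

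Next, I would subtract these two identities and invoke the two key combinatorial evaluations already recorded in the paper: Andrews' identity $\spt(N) = M_2^+(N) - N_2^+(N)$ and the definition $\ospt(N) = M_1^+(N) - N_1^+(N)$. This immediately produces
\begin{equation*}
\spt(N) - \ospt(N) = 2 \bigl( \mu_2^+(N) - \eta_2^+(N) \bigr).
\end{equation*}

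Finally, I would observe that $\mu_2^+(N)$ and $\eta_2^+(N)$ are integers, since $\binom{m}{2} \in \mathbb{Z}$ and both $M(m,N)$ and $N(m,N)$ are nonnegative integers. Therefore the right-hand side is an even integer, which gives the congruence $\ospt(N) \equiv \spt(N) \pmod{2}$. There is essentially no analytic obstacle here; the whole proof is a two-line algebraic identity, and the only point to verify carefully is that the definitions of $\mu_2^+$ and $\eta_2^+$ match the stated weighting (with $\rho = 1/2$ and $\lfloor (r-1)/2 \rfloor = 0$ both appearing for $r = 2$), so that the right integer relation with $M_1^+, M_2^+, N_1^+, N_2^+$ holds without a correction term.
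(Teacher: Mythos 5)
Your proof is correct and is essentially the same parity argument as the paper's: the integrality of $\binom{m}{2} = \frac{m^2-m}{2}$ that you exploit via $\mu_2^+(N)$ and $\eta_2^+(N)$ is exactly the fact $m^2 \equiv m \pmod{2}$ that the paper applies termwise to conclude $M_2^+(N) \equiv M_1^+(N)$ and $N_2^+(N) \equiv N_1^+(N) \pmod{2}$, and then $\spt(N) = M_2^+(N) - N_2^+(N) \equiv M_1^+(N) - N_1^+(N) = \ospt(N) \pmod{2}$. Your packaging through the symmetrized second moments is a harmless cosmetic variant; the only small inaccuracy is that $\rho$ does not appear in the definitions of $\mu_r^+$ and $\eta_r^+$ (it enters only the generating function $S_{\ell,r}$), so there is nothing to verify on that point.
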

\begin{proof}
Recalling \eqref{E:MN2k} and using the fact that $a^2 \equiv a \pmod{2}$ for any integer, we find that
\begin{equation*}
M_2^+(N)=\sum_{m\geq 1}  m^2 M(m, N)\equiv \sum_{m\geq 1}m M(m, N) =M_1^+(N)\pmod{2},
\end{equation*}
with a similar statement for the second rank moments.  Thus
\begin{equation*}
\spt(N) = M_2^+(N)-N_2^+(N)\equiv M_1^+(N)-N_1^+(N) \equiv \ospt(N)\pmod{2}.
\end{equation*}
\end{proof}

\appendix
\section{Asymptotic expansions of infinite sums}
\label{S:append}

In this section we present detailed proofs for the asymptotic behavior of several summations that appeared in Sections \ref{S:Asymp} and \ref{S:ospt}.  Our chief technical tool comes from Zagier's treatment of asymptotic expansions for series found in Section 4 of \cite{Zag06}.
In particular, suppose that a smooth function $f : (0, \infty) \to \C$ has an asymptotic power series expansion around $0$, which means that for any $S \geq 0$,
\begin{equation}
\label{E:ft}
f(t) = \sum_{n = 0}^S b_n t^n + O\left(t^{S+1}\right)
\end{equation}
as $t \to 0^+.$  For any $a > 0$, Zagier considered the summation
\begin{equation}
\label{E:gt}
g(t) := \sum_{m \geq 0} f((m+a)t),
\end{equation}
and showed that its asymptotic behavior can be simply described in terms of the coefficients of the expansion \eqref{E:ft}.  A function $f$ is said to be of {\it rapid decay} at infinity if $\int_{\ell}^\infty |f(u)| du$ converges for some $\ell > 0$.  The following result is stated as the first generalization of Proposition 3 in \cite{Zag06}.
\begin{proposition}
\label{P:Zag}
Suppose that $f$ has the asymptotic expansion \eqref{E:ft} and that $f$ and all of its derivatives are of rapid decay at infinity.  Suppose further that $I_f := \int_0^\infty f(u) du$ converges.  The function $g$ as defined in \eqref{E:gt} then has the asymptotic expansion
\begin{equation*}
g(t) = \frac{I_f}{t} - \sum_{n = 0}^S b_n \frac{B_{n+1}(a)}{n+1} t^n,
\end{equation*}
where $B_{n}(x)$ is the $n$-th Bernoulli polynomial, defined by
$\frac{t e^{xt}}{e^t - 1} = \sum_{n \geq 0} B_n(x) \frac{t^n}{n!}.$
\end{proposition}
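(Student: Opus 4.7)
The plan is to follow the Mellin transform argument that Zagier uses for Proposition 3 of \cite{Zag06}. Define the Mellin transform
$$
\tilde{f}(s) := \int_0^\infty f(u)\, u^{s-1}\, du.
$$
The rapid decay hypothesis at infinity ensures absolute convergence at the upper endpoint for every $s \in \C$, while the expansion $f(u) = \sum_{n=0}^S b_n u^n + O(u^{S+1})$ at the origin gives convergence at the lower endpoint for $\mathrm{Re}(s) > 0$; note that $\tilde{f}(1) = I_f$. Splitting the integral at $u = 1$ and subtracting the truncated Taylor polynomial on $(0,1]$ shows that $\tilde{f}$ continues meromorphically to $\C$, with simple poles at $s = 0, -1, -2, \dots$ and residues
$$
\mathrm{Res}_{s = -n}\tilde{f}(s) = b_n.
$$

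By Mellin inversion, for any $c > 1$,
$$
g(t) = \sum_{m \geq 0} f((m+a)t) = \frac{1}{2\pi i}\int_{\mathrm{Re}(s) = c} \tilde{f}(s)\, \zeta(s, a)\, t^{-s}\, ds,
$$
where $\zeta(s, a) := \sum_{m \geq 0}(m+a)^{-s}$ is the Hurwitz zeta function, meromorphic on $\C$ with a single simple pole of residue $1$ at $s = 1$. The interchange of the sum defining $g$ with the inverse Mellin integral is justified by absolute convergence, which requires decay of $\tilde{f}(s)$ along vertical strips. Such bounds follow from repeated integration by parts in the definition of $\tilde{f}$, and this is precisely the step at which the hypothesis of rapid decay for all derivatives of $f$ enters.

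I would then shift the contour leftward from $\mathrm{Re}(s) = c$ to $\mathrm{Re}(s) = -S - \tfrac{1}{2}$. The integrand has simple poles at $s = 1$, contributing $\tilde{f}(1)\, t^{-1} = I_f / t$, and at each $s = -n$ for $0 \leq n \leq S$, contributing $b_n\, \zeta(-n, a)\, t^n$. The classical identity $\zeta(-n, a) = -B_{n+1}(a)/(n+1)$ turns these residues into exactly the main term stated in the proposition. The remaining integral along $\mathrm{Re}(s) = -S - \tfrac{1}{2}$ is $O(t^{S + 1/2})$ by the vertical-strip decay of $\tilde{f}$ together with polynomial growth of $\zeta(s, a)$, and is absorbed into the implicit error term (and sharpened to $O(t^{S+1})$ by applying the same argument with $S+1$ in place of $S$).

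The main obstacle will be making the vertical-strip estimates on $\tilde{f}(s)$ sufficiently uniform to justify both the Mellin interchange and the tail bound after the contour shift; once these estimates are in hand, everything else reduces to standard residue bookkeeping together with the two classical evaluations $\tilde{f}(1) = I_f$ and $\zeta(-n, a) = -B_{n+1}(a)/(n+1)$.
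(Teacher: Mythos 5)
Your proposal is correct and follows essentially the same route as the source: the paper itself gives no proof of Proposition \ref{P:Zag}, citing it (with a sign correction) from Zagier \cite{Zag06}, whose argument is exactly the Mellin--Hurwitz-zeta mechanism you describe, namely $\tilde{g}(s)=\tilde{f}(s)\zeta(s,a)$, meromorphic continuation via subtracting the Taylor polynomial near $0$, a contour shift picking up the pole of $\zeta(s,a)$ at $s=1$ and the poles of $\tilde{f}$ at $s=-n$, and the evaluation $\zeta(-n,a)=-B_{n+1}(a)/(n+1)$. The only point to nail down, as you note, is the vertical-strip decay of the continued $\tilde{f}$ (via the split-at-$u=1$ representation together with repeated integration by parts using the rapid decay of the derivatives), which justifies both the interchange and the $O(t^{S+1})$ tail bound.
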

\begin{remark*}
This statement corrects a sign error in \cite{Zag06}, where the sum is added rather than subtracted.
\end{remark*}

We first apply this result to prove a bound needed in the case $j=1$ in the proof of Proposition \ref{P:Slr}.
\begin{proposition}
\label{P:j=1Sum}
As $y \rightarrow 0^+$,
\begin{equation*}
\sum_{n \geq 1} n e^{-\pi n^2 y} \ll \frac{1}{y}.
\end{equation*}
\end{proposition}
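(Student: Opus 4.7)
The plan is to recognize this as a direct application of Zagier's Proposition \ref{P:Zag}, which is the featured tool of this appendix. Set $f(u) := u e^{-\pi u^2}$, take the parameter $a := 1$, and rescale via $t := \sqrt{y}$. Then
\[
g(t) = \sum_{m \geq 0} f((m+1)t) = \sum_{n \geq 1} n t \cdot e^{-\pi n^2 t^2} = t \sum_{n \geq 1} n e^{-\pi n^2 y},
\]
so the target estimate $\sum_{n \geq 1} n e^{-\pi n^2 y} \ll y^{-1}$ is equivalent to proving $g(t) \ll t^{-1}$ as $t \to 0^+$.

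Next I would verify the hypotheses of Proposition \ref{P:Zag}. The Taylor series $f(u) = \sum_{k \geq 0} \frac{(-\pi)^k}{k!} u^{2k+1}$ furnishes an asymptotic expansion of the required form \eqref{E:ft}, with coefficients $b_0 = 0$, $b_1 = 1$, $b_{2k} = 0$, and $b_{2k+1} = (-\pi)^k/k!$ in general. Because $f$ and each of its derivatives are products of a polynomial and $e^{-\pi u^2}$, they all decay super-polynomially at infinity, so the rapid-decay hypothesis holds. Finally, the constant $I_f = \int_0^\infty u e^{-\pi u^2}\,du = \frac{1}{2\pi}$ converges to an explicit value via the standard Gaussian evaluation.

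Applying Proposition \ref{P:Zag} with, say, $S = 1$ and using $b_0 = 0$ then yields
\[
g(t) = \frac{I_f}{t} - b_1 \frac{B_2(1)}{2}\, t + O(t^2) = \frac{1}{2\pi t} + O(t),
\]
and dividing through by $t = \sqrt{y}$ gives
\[
\sum_{n \geq 1} n e^{-\pi n^2 y} = \frac{1}{2\pi y} + O(1) \ll \frac{1}{y}
\]
as $y \to 0^+$, as claimed. No real obstacle is anticipated; the function $f$ is as well-behaved as one could hope for Zagier's framework, and the only mild bookkeeping point is to track the extra factor of $t$ arising from $f$ being odd at the origin, which is precisely what converts the leading behavior $g(t) \sim (2\pi t)^{-1}$ into the claimed bound $y^{-1}$ for the original sum. (Alternatively, the same bound follows from a direct integral comparison, since $u \mapsto u e^{-\pi u^2 y}$ is unimodal with integral $\frac{1}{2\pi y}$ and maximum $O(y^{-1/2})$, but the argument above fits the style of the appendix.)
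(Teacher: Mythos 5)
Your proposal is correct and follows essentially the same route as the paper: both rewrite the sum as $t^{-1}\sum_{n\geq 1} f(nt)$ for a Gaussian-type $f$ (the paper takes $t=\sqrt{\pi y}$ and $f(u)=ue^{-u^2}$, you absorb the $\pi$ into $f$) and then apply Proposition \ref{P:Zag} to get the leading term $I_f/t$ plus a bounded remainder, yielding the bound $\ll 1/y$. Your version just spells out the hypothesis checks and the explicit constant a bit more fully.
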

\begin{proof}
We let $t := \sqrt{\pi y}$, and rewrite the sum as
\begin{equation}
\label{E:fsum}
\frac{1}{t} \sum_{n \geq 1} f(nt),
\end{equation}
where $f(t) := te^{-t^2}.$  Proposition \ref{P:Zag} then implies that
\begin{equation*}
\sum_{n \geq 1} f(nt) = \frac{I_f}{t} + O(1),
\end{equation*}
with the integral evaluating to $I_f = \frac{1}{2}.$  This implies the proposition statement.
\end{proof}

We next use the technique to complete the proof of Proposition \ref{P:Slr} in the cases of small $r$.
Recall the definition of $g_{\ell, j}$ may be found in \eqref{glj}.
\begin{proposition}
\label{P:glj}
Suppose that $y = \frac{1}{2 \sqrt{6N}}$ and $|x| \leq y.$  If $j = -1, 0,$ or $1$, then
\begin{equation*}
g_{\ell, j}(\tau) \ll 1.
\end{equation*}
\end{proposition}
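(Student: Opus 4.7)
The plan is to reduce the alternating sum defining $g_{\ell, j}$ to a form amenable to Proposition~\ref{P:Zag}, and then to handle the singular case $j = 1$ by a singularity-subtraction trick. First, I would set $Q := -2\pi i\tau$ and $t := \sqrt{\ell Q/2}$; the hypothesis $|x| \leq y$ forces $Q$ to lie in a fixed sector with $\mathrm{Re}(Q) \asymp |Q| \asymp y$, so $t$ lies in a fixed sector in $\mathbb{C}$ with $|t| \asymp N^{-1/4}$. Writing the exponent in $g_{\ell, j}$ as $-(nt)^2 - \gamma(nt)$ with $\gamma := 2\rho t/\ell$ (so $|\gamma| \ll N^{-1/4}$) yields
\[
g_{\ell, j}(\tau) = t^j \sum_{n \geq 1} (-1)^{n+1} F_\gamma(nt), \qquad F_\gamma(u) := u^{-j} e^{-u^2 - \gamma u},
\]
and splitting by parity gives $\sum_{n \geq 1}(-1)^{n+1} F_\gamma(nt) = \sum_{m \geq 0} F_\gamma((m + \tfrac{1}{2})(2t)) - \sum_{m \geq 1} F_\gamma(m(2t))$.

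For $j = -1$ and $j = 0$, the function $F_\gamma$ is smooth at $0$ and of rapid decay at infinity (uniformly for $|\gamma|$ small), so Proposition~\ref{P:Zag} applies to each of the two sums above with step $T = 2t$ and with $a = \tfrac{1}{2}$ and $a = 1$, respectively. The leading divergent contributions $I_{F_\gamma}/(2t)$ are identical for the two choices of $a$ and therefore cancel in the difference. What remains is a polynomial expansion in $t$ with coefficients involving $B_{n+1}(1) - B_{n+1}(\tfrac{1}{2})$ and the Taylor coefficients of $F_\gamma$; one checks that the leading term is a constant when $j = 0$ and is $O(t)$ when $j = -1$. After multiplying by $t^j$ this yields $g_{\ell, j}(\tau) = O(1)$ in both cases.

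The harder case is $j = 1$, where $F_\gamma(u) = u^{-1} e^{-u^2 - \gamma u}$ has a simple pole at $0$ and Proposition~\ref{P:Zag} no longer applies directly. My plan is to subtract the singularity: write $F_\gamma(u) = u^{-1} + \widetilde{F}_\gamma(u)$, where $\widetilde{F}_\gamma(u) := u^{-1}(e^{-u^2 - \gamma u} - 1) = -\gamma - u + O(u^2)$ is smooth at $0$. The singular piece contributes $\sum_{n \geq 1}(-1)^{n+1}/(nt) = t^{-1} \log 2$ explicitly, while Proposition~\ref{P:Zag} now applies to $\widetilde{F}_\gamma$ exactly as in the previous paragraph, yielding $\sum_{n \geq 1} (-1)^{n+1} \widetilde{F}_\gamma(nt) = O(|\gamma|) + O(t) = O(t)$. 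Multiplying by the overall factor $t^1$ then produces $g_{\ell, 1}(\tau) = \log 2 + O(t^2) = O(1)$.

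The main technical obstacle is to ensure uniformity of the Zagier-type estimates in $\gamma$ (equivalently, for both allowed values of $\rho$). Since $|\gamma| \ll N^{-1/4} \to 0$ in the regime of interest, the Taylor coefficients of $F_\gamma$ and $\widetilde{F}_\gamma$ at $0$ depend continuously on $\gamma$ and remain bounded, as does the integral $I_{F_\gamma}$; and the rapid-decay hypothesis at infinity holds uniformly in $\gamma$ for $|\gamma|$ small. Consequently the implied constants in Proposition~\ref{P:Zag} are uniform in the relevant range of parameters, and the conclusion $g_{\ell, j}(\tau) \ll 1$ follows.
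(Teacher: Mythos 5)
Your treatment of $j=-1$ and $j=0$ is essentially sound and in fact mirrors the paper's own handling of $j=-1$ (a difference of two Zagier-type sums with $a=\tfrac12$ and $a=1$ whose leading $I_f/t$ contributions cancel), provided you address one point the paper is careful about: Proposition \ref{P:Zag} is stated for a smooth function on $(0,\infty)$ with a \emph{real} step $t\to 0^+$, whereas your $t=\sqrt{\ell Q/2}$ is complex whenever $x\neq 0$. This is repairable --- absorb the phase $t/|t|$ into the function and keep the real step $|t|$, verifying rapid decay in the sector and uniformity in the phase and in $\gamma$; this is what the paper does implicitly by separating real and imaginary parts and carrying the bounded parameter $x/y$ inside a cosine --- but as written the invocation of Proposition \ref{P:Zag} is outside its stated hypotheses.

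The genuine gap is the case $j=1$. After subtracting the singular part you are left with $\widetilde{F}_\gamma(u)=u^{-1}\bigl(e^{-u^2-\gamma u}-1\bigr)$, and you claim Proposition \ref{P:Zag} applies to it ``exactly as in the previous paragraph.'' It does not: $\widetilde{F}_\gamma(u)\to -u^{-1}$ as $u\to\infty$, so $\widetilde{F}_\gamma$ is not of rapid decay, the integral $I_{\widetilde{F}_\gamma}=\int_0^\infty \widetilde{F}_\gamma(u)\,du$ diverges logarithmically, and moreover each of the two parity subsums $\sum_{m}\widetilde{F}_\gamma\bigl((m+a)\,2t\bigr)$ diverges like a harmonic series, so the parity split you rely on is not even defined for this function; only the alternating combination converges. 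To extract its behavior you would need either an alternating (Euler--Maclaurin) analogue of Proposition \ref{P:Zag}, in which the $u^{-1}$ term contributes the finite value $\log 2$, or a comparison function with the same pole but rapid decay --- for instance subtracting $u^{-1}e^{-u}$, whose alternating lattice sum equals $t^{-1}\log\bigl(1+e^{-t}\bigr)$, leaves a remainder that genuinely satisfies the hypotheses of Proposition \ref{P:Zag} --- or else the paper's elementary route for $j=1$ (and $j=0$): split into real and imaginary parts, pair the $(2n-1)$-st and $2n$-th terms, and use the bounds $|\cos(x+a)-\cos x|\le\min\{|a|,2\}$ and $|1-e^{-x}|\le\min\{x,1\}$ together with Gaussian and incomplete-Gamma comparisons. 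As it stands, your argument for $j=1$ does not go through.
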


\begin{proof}
We start with the case $j=1$, and first assume that $\ell$ is odd (so $\rho = 0$).
By definition,
\begin{equation}
\label{E:gell1}
g_{\ell, 1}  (\tau) = \sum_{n\geq 1} \left( \frac{e^{\pi i \ell (2n-1)^2\tau }}{2n-1} - \frac{e^{\pi i \ell (2n)^2\tau }}{2n} \right).
\end{equation}

We show that the real and imaginary parts are both individually bounded, focusing only on the real part, as the imaginary part is treated identically.  Euler's identity implies that
\begin{equation}
\label{E:g1Re}
\text{Re} \left( g_{\ell,1} (\tau) \right) =  \sum_{n\geq 1} \left( \frac{e^{ -  \pi \ell (2n-1)^2y} \cos \left( \pi\ell (2n-1)^2 x\right)}{2n-1} -\frac{e^{ -  \pi \ell (2n)^2y} \cos \left(\pi\ell (2n)^2  x\right)}{2n}\right).
\end{equation}
In order to combine these odd and even terms, we begin by writing
$$
	\frac1{2n} = \frac{1}{2n-1} + O\left(\frac{1}{n^2}\right).
$$
If this big-O term is inserted back into \eqref{E:g1Re}, the sum is again absolutely and uniformly convergent, so we may discard this error term without affecting the overall convergence.

We use the following trivial bounds in our estimates:
\begin{alignat*}{2}
|\cos (x+a) - \cos (x) | & \leq \min \left\{ |a|, 2 \right\}  && \qquad x,a,\in\R \\
|1-e^{-x} | & \leq \min \left\{x, 1 \right\} && \qquad x\geq 0.
\end{alignat*}
These bounds may be applied once the even terms in \eqref{E:g1Re} are rewritten in the following way:
\begin{multline}
\label{E:evenRewrite}
 \frac{e^{ -  \pi \ell (2n)^2y} \cos \left(\pi\ell (2n)^2 x \right)}{2n-1}
 \quad =  \frac{e^{ -  \pi \ell (2n-1)^2y} }{2n-1}
\bigg( \left( e^{ -\pi \ell(4n-1)  y } -1 \right) \cos \left( \pi\ell (2n)^2x\right)  \\ + \Big(-  \cos \left( \pi\ell (2n-1)^2x\right) +  \cos \left( \pi\ell  (2n)^2x\right)  \Big)
 - \cos \left( \pi\ell (2n-1)^2x\right) \bigg).
\end{multline}

The contribution of the first pair of terms in \eqref{E:evenRewrite} to the sum in \eqref{E:g1Re} is then asymptotically bounded (up to a constant) by
\begin{align}
\label{E:even1}
\sum_{n\geq 1} \frac{e^{-\pi\ell (2n-1)^2y }}{2n-1} \min \left\{y (4n-1), 1 \right\}
& \ll y \sum_{1\leq n\leq y^{-1}} e^{-n^2 y} + \sum_{n\geq y^{-1}} \frac{e^{-n^2y}}{n} \\
& \ll \sqrt{y} + y \sum_{n \geq y^{-1}} e^{-n^2 y} \ll \sqrt{y}, \notag
\end{align}
where the first sum was bounded using a comparison with a Gaussian integral and the second by a comparison with the incomplete gamma function.
The second summand from \eqref{E:evenRewrite} also gives (noting that $|x|<y$)
\begin{equation}
\label{E:gell1Bound}
\sum_{n\geq 1} \frac{e^{-\pi\ell(2n-1)^2  y}}{2n-1} \min \left\{ 1,  |x| (2n-1) \right\} \ll \sum_{n\geq 1} \frac{e^{-\pi\ell(2n-1)^2  y}}{2n-1} \min \left\{1, ny \right\} \ll \sqrt{y}.
\end{equation}
Since the third term in \eqref{E:evenRewrite} cancels the odd terms in \eqref{E:g1Re}, the real part is bounded overall, as claimed.

The proof is analogous if $\ell$ is even, with the only difference being minor shifts in all of the exponents.  In particular, in this case
\begin{equation*}
g_{\ell, 1}(\tau) = q^{-\frac{1}{8\ell}} \sum_{n \geq 1} \frac{(-1)^{n+1} q^{\frac{\ell}{2} \left(n + \frac{1}{2\ell}\right)^2}}{n},
\end{equation*}
so all occurrences of $(2n-1)^2$ and $(2n)^2$ from \eqref{E:gell1} through \eqref{E:gell1Bound} are simply replaced by $\left(2n - 1 - \frac{1}{2\ell}\right)^2$ and $\left(2n - \frac{1}{2\ell}\right)^2$, respectively.

The proof that $g_{\ell, 0}$ is uniformly bounded is similar, although the first term in \eqref{E:even1} is only bounded by a constant rather than $\sqrt{y}$.  Indeed, in this case the first term is bounded by
$y \sum_{n = 1}^{y^{-1}} n e^{-n^2 y}$, which is bounded by Proposition \ref{P:j=1Sum} to yield the claimed result.

Finally, the case $j=-1$ only arises if $r=3$, in which case $\rho = 0$.  As above, we separate the odd and even terms, and consider only the real part, as the imaginary part can be treated in the same way.  We have
\begin{align}
\label{E:Regl-1}
\text{Re} \left( g_{\ell, -1} (\tau) \right) & =\sum_{n\geq 1} \left[ (2n-1) e^{ - 4 \pi \ell \left( n-\frac12\right)^2 y} \cos \left( 4 \pi\ell  \left( n-\frac12\right)^2 x \right)   - 2n e^{ - 4 \pi \ell n^2 y} \cos \left( 4 \pi\ell  n^2 x \right)\right] \notag \\
& =  2 y^{-\frac12} \sum_{n\geq 1} \left( f_{\frac{x}{y}} \left( \left(n-\frac12 \right) \sqrt{y} \right) - f_{\frac{x}{y}} \left(n\sqrt{y} \right)   \right),
\end{align}
where
$$
f_{v} \left(t \right) := te^{-4\pi\ell t^2} \cos \left( 4\pi \ell v t^2 \right).
$$
Note that $f_v$ is an odd function, and thus its Taylor series only has odd powers of $t$.
Proposition \ref{P:Zag} then gives the asymptotic expansion
\begin{equation}
\label{E:fxy}
 \sum_{n\geq 1} \left( f_{\frac{x}{y}} \left( \left(n-\frac12 \right) \sqrt{y} \right) - f_{\frac{x}{y}} \left(n\sqrt{y} \right)   \right) = \frac{1}{\sqrt{y}} \left( I_f  + O_{\frac{x}{y}} (y)\right) -  \frac{1}{\sqrt{y}} \left( I_f  + O_{\frac{x}{y}} (y)\right)
\end{equation}
so long as
$$
I_f := \int_0^\infty f_v(t) dt = \int_0^\infty te^{-4\pi\ell t^2} \cos \left( 4\pi \ell v t^2 \right) dt
$$
converges.  The integral may be uniformly bounded as
$$
\left| I_f \right| \leq \int_0^\infty te^{-4\pi\ell t^2} dt < \infty,
$$
which, combined with the assumption that $\left| \frac{x}{y} \right|  < 1$, gives an overall bound for \eqref{E:fxy} that is uniform over the claimed region.  The expansions \eqref{E:Regl-1} and \eqref{E:fxy} are therefore uniformly bounded as $O(1)$, which completes the proof of the proposition.
\end{proof}

We conclude by proving an asymptotic expansion used in the proof of Proposition \ref{P:Tq}.
\begin{proposition}
\label{P:gy}
With $g(y)$ defined as in \eqref{E:gy}, as $y \to 0^+$ we have
\begin{equation*}
g(y) = \frac14 y+O\left(y^2\right).
\end{equation*}
\end{proposition}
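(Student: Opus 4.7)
The plan is to apply Zagier's Proposition \ref{P:Zag} directly to $g(y)$, after splitting the alternating sum into its odd- and even-indexed subseries so that the main $1/t$ contributions cancel. Setting $t := \sqrt{y}$ and
\[
f(t) := \tfrac{1}{t} e^{-t^2/2}\bigl(1 - e^{-t^2}\bigr),
\]
I can rewrite the definition \eqref{E:gy} as
\[
g(y) = t \sum_{n \geq 1} (-1)^{n+1} f(nt)
     = t\!\left[\sum_{k \geq 0} f\!\bigl((k+\tfrac12)(2t)\bigr) - \sum_{k \geq 0} f\!\bigl((k+1)(2t)\bigr)\right].
\]
Both inner sums now have exactly the shape $\sum_{m \geq 0} f((m+a)\tau)$ required by Proposition \ref{P:Zag}, with $\tau = 2t$ and $a \in \{\tfrac12, 1\}$.

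Next I would verify the hypotheses of Proposition \ref{P:Zag}. The Gaussian factor $e^{-t^2/2}$ guarantees that $f$ and every derivative decays rapidly at infinity, and that $I_f = \int_0^\infty f(u)\,du$ converges; the apparent pole at the origin is removable because $(1-e^{-t^2})/t = t - \tfrac12 t^3 + \tfrac16 t^5 - \cdots$, and multiplying by $e^{-t^2/2} = 1 - \tfrac12 t^2 + \tfrac18 t^4 - \cdots$ gives the smooth expansion
\[
f(t) = t - t^3 + \tfrac{13}{24} t^5 - \cdots,
\]
so in Zagier's notation $b_0 = 0$, $b_1 = 1$, $b_2 = 0$.

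Applying Proposition \ref{P:Zag} to each sum and subtracting, the leading $I_f/(2t)$ contributions cancel exactly, and I obtain
\[
g(y) = t \sum_{n=0}^{S} b_n\,\frac{B_{n+1}(1) - B_{n+1}(\tfrac12)}{n+1}\,(2t)^n + O\!\left(t^{S+2}\right).
\]
With $S = 2$, the vanishing of $b_0$ and $b_2$ leaves only the $n=1$ term. Using $B_2(x) = x^2 - x + \tfrac16$, one has $B_2(1) - B_2(\tfrac12) = \tfrac16 + \tfrac1{12} = \tfrac14$, so the $n=1$ contribution is
\[
t \cdot 1 \cdot \frac{1/4}{2} \cdot (2t) = \frac{t^2}{4} = \frac{y}{4},
\]
and the error is $O(t^4) = O(y^2)$, which is exactly the claimed expansion.

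The only real point requiring attention is the cancellation of the $I_f/(2t)$ terms and the verification that $f$ is genuinely smooth at $0$ with the stated Taylor series; both are routine. No analytic obstacle arises, and the computation is entirely parallel in spirit to the earlier applications of Zagier's method in the appendix (cf.\ the proofs of Propositions \ref{P:j=1Sum} and \ref{P:glj}).
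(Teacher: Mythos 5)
Your proposal is correct and follows essentially the same route as the paper: split the alternating sum into odd and even subsequences, apply Zagier's Proposition \ref{P:Zag} to each so that the $I_f/t$ main terms cancel, and read off the linear term from $B_2(1)-B_2(\tfrac12)=\tfrac14$. The only cosmetic difference is that you keep $f(t)=t^{-1}e^{-t^2/2}(1-e^{-t^2})$ with step $2t$, while the paper rescales and uses $f(t)=\frac{e^{-2t^2}(1-e^{-4t^2})}{2t}$ with step $\sqrt{y}$; the computations are identical.
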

\begin{proof}
We write
\begin{equation*}
g(y) = y^{\frac12} \sum_{n\geq 1} \left(f \left(\left( n-\frac12 \right) \sqrt{y} \right) - f \left(n\sqrt{y} \right)   \right)
\end{equation*}
with
\begin{align*}
f(t) := \frac{e^{-2t^2} \left(1-e^{-4t^2}  \right)}{2t}=  2t +  O\left(t^3\right).
\end{align*}
Proposition \ref{P:Zag} then gives the expansions
\begin{align*}
&\sum_{n\geq 0} f \left(\left( n+\frac12 \right) \sqrt{y} \right) \sim \frac{I_f}{\sqrt{y}} - B_2 \left( \frac12 \right)  \sqrt{y} + O\left(y^{\frac{3}{2}}\right), \\
&\sum_{n\geq 1} f \left( n \sqrt{y} \right)  \sim \frac{I_f}{\sqrt{y}} - B_2(1)  \sqrt{y} + O\left(y^{\frac{3}{2}}\right),
\end{align*}
with the convergent integral given by
$$
I_f = \int_0^\infty f(u) du < \infty.
$$\
Using the fact that $B_2(x) = x^2 - x + \frac{1}{6},$ we thus find the overall expansion
\begin{equation*}
g(y) = y^{\frac12} \left(\frac1{\sqrt{y}} \left(I_f - I_f \right) + \sqrt{y} \left(\frac1{12} + \frac1{6}\right) + O\left(y^{\frac{3}{2}}\right)\right)= \frac14 y + O\left(y^2\right).
\end{equation*}
\end{proof}

\end{document}